\documentclass[journal]{IEEEtran}
\usepackage[T1]{fontenc}
\usepackage{times}
\usepackage{amsmath,amsfonts,amssymb}
\usepackage{amsthm}
\usepackage{bbm}
\usepackage{algorithmic}
\usepackage{algorithm}
\usepackage{array}
\usepackage[caption=false,font=normalsize,labelfont=sf,textfont=sf]{subfig}
\usepackage{textcomp}
\usepackage{stfloats}
\usepackage{url}
\usepackage{verbatim}
\usepackage{graphicx}
\usepackage{cite}
\usepackage{hyperref}
\usepackage[table]{xcolor}
\usepackage{float}
\usepackage{tabularx}
\usepackage{multirow}
\usepackage{bbold}
\usepackage{dsfont}
\usepackage{enumitem}
\usepackage{booktabs}
\usepackage{xparse}
\usepackage{cleveref}
\usepackage{catchfilebetweentags}
\usepackage{cite}

\setlength{\heavyrulewidth}{0.8pt}
\setlength{\aboverulesep}{0pt}
\setlength{\belowrulesep}{0pt}
\newcolumntype{Y}{>{\centering\arraybackslash}X} 

\hyphenation{IEEE-Xplore}

\DeclareMathOperator*{\argmax}{arg\,max}
\DeclareMathOperator*{\argmin}{arg\,min}
\renewcommand{\algorithmicrequire}{\textbf{Input:}}
\renewcommand{\algorithmicensure}{\textbf{Output:}}

\newtheorem{lemma}{Lemma}

\newtheorem{proposition}{Proposition}

\newcommand{\cc}{\mathrm{c}}
\newcommand{\dd}{\mathrm{d}}
\newcommand{\g}{\mathrm{g}}
\newcommand{\cJ}{\mathcal{J}}
\newcommand{\cO}{\mathcal{O}}
\newcommand{\R}{\mathbbm{R}}
\newcommand{\w}{\overline{w}}

\newcommand{\cY}{\mathcal{Y}}
\newcommand{\cZ}{\mathcal{Z}}

\newcommand{\br}{\mathrm{br}}
\newcommand{\brc}{\mathrm{brc}}
\newcommand{\brs}{{\mathrm{br}*}}
\newcommand{\nbr}
{\mathrm{non\text{-}br}}

\newcommand{\dc}
{\mathrm{dc}}
\newcommand{\bund}{\mathrm{BUND}}
\newcommand{\chg}{\mathrm{chg}}
\newcommand{\dis}{\mathrm{dis}}
\newcommand{\tem}{\mathrm{em}}
\newcommand{\es}{\mathrm{es}}
\newcommand{\esp}{\mathrm{es\text{-}p}}
\newcommand{\ese}{\mathrm{es\text{-}e}}
\newcommand{\uc}{\mathrm{uc}}
\newcommand{\hy}{\mathrm{hy}}
\newcommand{\full}{\mathrm{full}}
\newcommand{\tni}{\mathrm{ni}}
\newcommand{\tnis}{{\mathrm{ni}*}}

\newcommand{\sh}{\mathrm{sh}}
\newcommand{\vio}{\mathrm{vio}}

\newif\ifshowedits
\showeditsfalse  

\ifshowedits
  \NewDocumentCommand{\edit}{+m}
    {{\color{blue}#1}}
  \newcommand{\rev}[1]{\textcolor{blue}{#1}}
\else
  \NewDocumentCommand{\edit}{+m}
    {#1}
  \newcommand{\rev}[1]{#1}
\fi


\begin{document}

\title{Contingency-Aware Nodal Optimal Power Investments with High Temporal Resolution}

\author{Thomas Lee, \IEEEmembership{Student Member, IEEE}, Andy Sun, \IEEEmembership{Senior Member, IEEE.}

\thanks{Supported by MIT Energy Initiative's Future Energy Systems Center.}
\thanks{Thomas Lee is with the Institute for Data, Systems, and Society, MIT.}
\thanks{Andy Sun is with the Sloan School of Management, MIT.}}

\markboth{}%
{}



\maketitle

\begin{abstract}
We present CANOPI, a novel algorithmic framework, for solving the Contingency-Aware Nodal Power Investments problem, a large-scale nonlinear optimization problem that jointly optimizes investments in generation, storage, and transmission upgrades, including representations of unit commitment and long-duration storage. \rev{T}he problem's scale arises from the confluence of spatial and temporal resolutions\rev{, along with the large number of contingency constraints. Further, t}he underlying problem is nonlinear due to transmission upgrades' \rev{impact} on impedances. We propose algorithmic approaches to address these computational challenges. We pose a linear approximation and develop a fixed-point algorithm to adjust for nonlinear impedance feedback. We solve the large-scale linear expansion model with a specialized level-bundle method leveraging a novel interleaved approach to contingency constraint generation. We introduce a minim\rev{um} cycle basis algorithm that improves numerical sparsity \rev{and solve times} of cycle-based DC power flow. CANOPI is demonstrated on a 1493-bus Western Interconnection test system built from realistic-geography network data, with hourly operations spanning 52 week-long scenarios and a total possible set of 20 billion individual transmission contingency constraints. Numerical \rev{experiments} quantify reliability and economic benefits of incorporating transmission contingencies in integrated planning models and highlight the computational advantages of the proposed methods.

\end{abstract}

\begin{IEEEkeywords}
    Power System Planning, Capacity Expansion
    Models, Decomposition Methods, Optimal Power Flow.
\end{IEEEkeywords}

\section{Introduction}

Capacity expansion models are crucial tools for grid planners, regulators, and utilities to systematically plan long-lived electricity infrastructure, including generation, storage, and transmission, while representing physical, engineering, and policy constraints. The core computational challenge lies in the coupling between long-term investment decisions and short-term operational constraints over multiple time periods. Detailed time domain resolution is required to represent vital clean energy technologies: meteorology drives temporal variation in wind and solar availability, and the value of energy storage emerges from operation over consecutive time periods.

Spatial coupling is also critical. The abundance of wind, solar, and geothermal depends on siting. Generation and storage have strong interactions with the transmission network \cite{krishnan2016co}, e.g. via substitution effects. Consequently, network spatial resolution significantly impacts the accuracy of capacity expansion models
\cite{frysztacki2021strong, jacobson2024quantifying, serpe2025importance}. Moreover, power systems in the US face accelerating load growth, driven by factors including AI data centers and electrification, concurrently with historically slow transmission expansion \cite{doe2023transmission}. Two related critical transmission bottlenecks remain: resource \emph{interconnection} (stranding terawatts of generation and storage projects \cite{rand2024queued}) and transmission \emph{congestion} during grid operations (raising costs and causing renewable energy curtailment).


\rev{The interaction of} major system changes with these two grid phenomena (interconnection and congestion) \rev{depends on} the security-constrained power flow models that underlie power transmission physics. The vast majority of interconnection and congestion constraints are driven by transmission \emph{contingencies}, which are NERC-enforced constraints to withstand the loss of any single transmission component \cite{nerc2023tpl001}. \rev{In a power flow model, each transmission constraint is either a ``base-case'' (no contingency) or contingency constraint.} Table~\ref{tab:caiso_congestion} shows the \rev{high proportional} importance of contingencies \rev{in real-life grids.}\footnote{{Sources: For interconnection planning (data available only for PJM), we show the share of contingency-caused binding constraints in sampled projects' System Impact Studies. Day-ahead congestion results are shown for July 2024.}} This suggests base-case power flows are insufficient.


\vspace{-10pt}
\begin{table}[ht]
\centering
\caption{Transmission constraint causes: contingency vs. base-case.}
\vspace{-5pt}
\label{tab:caiso_congestion}
\begin{tabular*}{\columnwidth}{@{\extracolsep{\fill}}  c c c  }
\toprule
\textbf{Grid process} & \textbf{Region} &  \textbf{\rev{\% caused by contingency}}
\\
\midrule
{{Interconnection planning}} & PJM & 90\% 
\\
\hline
  &      PJM & 86\%  
\\
{Day-ahead market}  & ERCOT & 98\%  
\\
  & CAISO & 93\%  
\\
\bottomrule
\end{tabular*}
\end{table}
\vspace{-5pt} 

Co-optimizing transmission and generation can significantly improve efficiency compared to decoupled planning \cite{krishnan2016co}. However, a lack of holistic planning tools has forced these two sides to remain largely separate processes requiring manual iterations. In fact, inefficient coordination between generation and transmission planning is a major cause for grid underinvestment and queue delays ({e.g., developers' multi-site speculative interconnection requests}) \cite{mays2023generator}. This motivates {expansion} models with high temporal \emph{and} spatial resolutions.

\subsection{Literature review}

Prior studies address generation and transmission expansion with varying levels of detail. \rev{Some past works on} coordinated generation-transmission planning \rev{employ detailed power flow models, which could explicitly model contingencies, but they are} limited in network size \cite{roh2009market, motamedi2010transmission} or temporal scope \cite{zhang2018mixed, mehrtash2020security,degleris2024gpu}. \rev{Modern} zonal models \cite{jacobson2024quantifying, serpe2025importance, pecci2025regularized} \rev{can cover large regions and detailed time resolution, and their zonal transfer limits could represent contingencies in an aggregated manner (if derived from underlying nodal networks)} \cite{brown2023general, sergi2024transmission}. Yet transfer capability fluctuates with system conditions \rev{instead of staying at a pre-computed} value \cite{nerc2024itcs} especially under evolving resource mixes or rapid load growth, which are precisely the situations studied by capacity expansion models. For example, 13 of PJM’s top 25 transmission constraints in 2023 did not appear in the 2022 list \cite{monitoring2023state}, due to \rev{system changes}.

\rev{Several works (e.g., the PyPSA model) heuristically derate lines to 70\% of nominal capacity} \cite{horsch2018linear, neumann2019heuristics, frysztacki2021strong}, \rev{but this has no theoretical guarantee and can lead to either over- or underinvestment. This paper will empirically test this heuristic. Another approach} \cite{DOE_NTP_2024} sequentially solve\rev{s} a zonal capacity expansion model, and then downscale\rev{s} the results for nodal power flow simulations \rev{(which can model explicit contingencies)}. \rev{This approach also lacks} optimality guarantee\rev{s} and is highly sensitive to the downscaling heuristics employed.





\vspace{-6pt}
\begin{table}[ht]
    \centering
    \caption{Comparison of prior literature vs. this work.}
\vspace{-5pt}
    \begin{tabular*}{\columnwidth}{@{\extracolsep{\fill}}  c  c  c  }
    \toprule
       \textbf{Papers}  &   \textbf{Limitations}  &  \textbf{This paper}
\\
\midrule
       \cite{roh2009market, motamedi2010transmission} &  Small network (5{$\sim$}6 buses) & 1,493 buses
\\
    \cite{zhang2018mixed, mehrtash2020security,degleris2024gpu} & Small timescale (1{$\sim$}192 hours) & 8,736 hours
\\
\hline
   \cite{jacobson2024quantifying, serpe2025importance, pecci2025regularized} & \rev{Security-informed zonal limits} & \rev{Explicit $n{-1}$ security}
\\
    \cite{horsch2018linear, neumann2019heuristics, frysztacki2021strong}
    & \rev{Heuristic scalar security factor} & \rev{Explicit $n{-}1$ security}
\\
    \cite{DOE_NTP_2024} & Sequential zonal CEM $\to$ nodal & \emph{Integrated} nodal CEM
\\
\bottomrule
    \end{tabular*}
    \label{tab:lit_review}
\end{table}
\vspace{-7pt}




\subsection{Contributions}

In this paper, we develop a novel algorithmic framework to co-optimize generation-storage-transmission in a holistic manner with high temporal resolution, while \rev{explicitly modeling} $n{-}1$ security-constrained nodal DC power flows \rev{(SC-DC)}. To achieve this, we make the following contributions:
\begin{enumerate}[leftmargin=*]
    \item We formulate a capacity expansion model with \rev{SC-DC constraints}, while capturing a nonlinear effect of impedance\rev{-capacity} feedback. To enable tractability, we \rev{linearly approximate the model} and develop a fixed-point correction algorithm to reconcile the nonlinear relationship.

    \item To solve the linear approximation, we design a novel variant of the level-bundle method that combines analytic-center stabilization with \emph{interleaved} contingency constraint generation. This tractable algorithm is scalable to cover billions of potential transmission contingency constraints.

    \item {We introduce the usage of \emph{minim\rev{um}} cycle bases for DC power flow constraints, and develop a novel integer programming algorithm to efficiently compute these bases}.

    \item We \rev{quantify the impacts of explicit} nodal contingencies \rev{through controlled numerical experiments, using} our algorithmic \rev{framework} (named CANOPI) \rev{on} a realistic Western Interconnection network with detailed hourly operations.
    
\end{enumerate}

\section{Model}
\label{sec:model}

In this section, we introduce our formulation of the capacity expansion problem. Assume the connected network has $N$ nodes and $b$ AC transmission branches with the branch incidence matrix $A^\br \in \{-1,0,1\}^{N\times b}$. Each branch $j$ has arbitrarily assigned ``from'' and ``to'' buses $i^{fr}_j, i^{to}_j$ with entries $A^\br[i^{fr}_j,j]={1}$ and $A^\br[i^{to}_j,j]={-}1$. There are $\beta$ HVDC lines with an incidence matrix $A^\dc \in \{-1,0,1\}^{N\times \beta}$. {This work models one year of operations, divided into $|\Omega| = 52$ \rev{sequential} weeks as scenarios indexed by $\omega \rev{\,=1,...,52}$}.

\rev{This framework can also be extended to multiple weather years, or additional scenarios for high-stress weeks or days.}

\subsection{Approach to time-coupling constraints}
\label{sec:time_coupling}
\rev{Long-duration energy storage (LDES) energy states are} linked between adjacent weekly scenarios $\omega$ and $\omega{+}1$. \rev{Following} \cite{pecci2025regularized}, generator ramping, short-duration storage, and unit commitment \rev{(UC)} are modeled with intra-week ``cyclic constraints'', \rev{where} each week\rev{'s} first hour is modeled as being immediately after the same week\rev{'s} last hour. \rev{As in} \cite{pecci2025regularized}, \rev{LDES energy states are clustered by zone, and UC constraints use a continuously-relaxed clustering} by zone and technology \rev{(e.g., coal, combined-cycle, combustion turbine)}. This combination of time-coupling assumptions \rev{(summarized in Table \ref{tab:temporal_coupling}) is} discussed in detail by \cite{pecci2025regularized}, including limitations and extensions. \rev{Multistage operational uncertainty (e.g., \cite{lorca2016multistage,bodal2022capacity}) is outside the scope of this work.}

\vspace{-7pt}
\begin{table}[H]
    \caption{Overview of time-coupling and spatial assumptions}
\vspace{-5pt}
    \centering
    \begin{tabular}{c c c  c c}
    \toprule
      \textbf{Model aspect}  & \textbf{Time-coupling} & \cite{pecci2025regularized} & \textbf{This paper}
      \\
      \hline
      Generator ramping & Intra-week cyclic & Zonal & \emph{Nodal}
      \\
      Short-duration storage & Intra-week cyclic & Zonal & \emph{Nodal}   
      \\
      Unit commitment & Intra-week cyclic & Zonal & Zonal
      \\
      \rev{LDES energy states} & Multi-week linked & Zonal & Zonal
      \\
    \bottomrule
    \end{tabular}
    \label{tab:temporal_coupling}
\end{table}
\vspace{-5pt}

\rev{To justify these modeling choices, we quantify the effect of changing \emph{intra-week} to \emph{full-year} constraints to be ${\sim}0.1\%$ of total costs (in our test case). Next, we quantify the effect of \emph{zonal vs. nodal LDES} states to be ${\sim}$0.2\% of total costs (in our test case). Based on prior literature \cite{palmintier2013heterogeneous,poncelet2020unit}, we estimate that \emph{continuous clustering for UC} affects total costs by ${\sim}$0.4\%.\footnote{\rev{In \cite{palmintier2013heterogeneous}, individual-unit UC MILP vs. technology-clustered MILP differ in fuel costs by $0.7\%$. In \cite{poncelet2020unit}, clustered MILP vs. clustered LP differ in total costs by $0.2\%$. Thus, UC \emph{continuous clustering} impacts total costs by roughly $0.4\% \approx 0.7\% \times 24\% + 0.2\%$ (since ${\sim}24\%$ of our costs are fuel costs).}}}

While this paper focuses \rev{only} on existing reservoir-based hydropower, this framework can be extended to \rev{other LDES} technologies. \rev{Compared} to \cite{pecci2025regularized}, our model adds nodal resolution for ramping and short-duration storage, constituting a Pareto improvement \rev{for the time-coupling constraints}.




{
\subsection{Focus on fixed-topology, \rev{continuous} branch upgrades}
\label{sec:fixed_topology}
``Speed to power'' \rev{is now} a primary motivation in modern power grids amid rapid load growth. \rev{While} greenfield transmission faces \rev{permitting challenges and stakeholder opposition (often taking} 5--15 years), brownfield reconductoring along existing corridors can be completed in 1.5--3 years \rev{at ${\sim}$}2x \rev{lower} cost \cite{gridlab2024reconductoring}. In \cite{chojkiewicz2024accelerating}, a nationwide \rev{study} suggests reconductoring can contribute over 80\% of new transmission capacity by 2035. Motivated by speed, affordability, and relevance, this work focuses on brownfield fixed-topology branch upgrades.


CANOPI features continuous transmission sizing and impedance feedback correction that \rev{approximates} the full menu of granular conductor options \rev{(e.g.,} Southwire offers 70 ACSR sizes differ\rev{ing} by only 5MW on average).\footnote{{Southwire. ``Overhead Conductor Manual 2nd Ed.'' 5MW assumes 230kV.}} \rev{For} transmission expansion planning (TEP) with \emph{new} corridors, one could pre-identify candidate corridors and then use our framework to co-optimize transmission sizing. In contrast to TEP models with one binary variable \rev{per line, the continuous model could lead to more mathematically efficient outcomes.}


\rev{When topology must be optimized endogenously, or when transmission owners are restricted to a small set of standard cable types (e.g., to simplify spare parts or maintenance), this paper's exclusion of integer branch constraints is a limitation. Nonetheless, we remark that the continuous relaxation is a \emph{necessary} step in exact MILP solvers (e.g., branch-and-bound), so one can view this paper as a necessary milestone to integer constraints.} \rev{We comment further on this point in Section \ref{sec:extension}.}

\subsection{Capacity expansion problem}
\label{sec:cem}

We \rev{optimize} {first-stage decisions} $x = (x^\g, \allowbreak x^\es, \allowbreak x^\br, \allowbreak {x^\mathrm{inv}}, \allowbreak x^\mathrm{em})$ consisting of \emph{new} generation capacities $x^\g \,{\in}\, \R^{n^\g}$ of $n^\g$ generators, capacities of power $x^\esp \, {\in} \, \R^{n^\es}$ and energy $x^\ese \, {\in}\, \R^{n^\es}$ of {$n^\es$ short-duration} storage devices, capacities $x^\br \,{\in}\, \R^b$ of $b$ AC transmission branches, \rev{weekly} states $x^\mathrm{inv} \,{\in}\, \R^{ |\Omega|\cdot |\mathcal{G}^\hy|}$ for $ |\mathcal{G}^\hy|$ \rev{LDES} clusters, and the allocation of a policy metric $x^\tem \,{\in}\, \R^{|\Omega|}$ across \rev{weeks}. The variables $x^\mathrm{inv}_{\omega}$ \rev{are LDES} states at the beginning of \rev{each week} $\omega$.

\rev{The mathematical technique of using first-stage ``budgeting variables'' to enforce year-long coupling constraints (e.g., annual limits on emissions or fossil generation) is introduced by \cite{jacobson2024computationally} and extended to multi-week LDES state trajectories by \cite{pecci2025regularized}. We follow this approach.}


We model transmission capacity upgrade along existing branches as a continuous variable $x^\br$, \rev{as discussed above}. HVDC upgrades can \rev{also} be easily incorporated. In this setting, the incidence matrices $A^\br, A^\dc$ remain constant. With upper bound $\overline{x}$, i.e., limits on investments and reservoir \rev{states}, and a policy \rev{goal} $\overline{x}^\tem$, the feasibility region of $x$ is a polytope
\begin{equation}
\label{eq:investment_constraints}
    \mathcal{X} = \left\{x: 0\leq x \leq \overline{x},\ \mathbf{1}^\top x^\tem \leq \overline{x}^\tem \right\}.
\end{equation}


We define the overall capacity expansion model (CEM) as
\begin{align}
(\text{{CEM}}) \ \min_{x \in \mathcal{X} } \quad &c^\top x + \sum\nolimits_{\omega \in \Omega} h(x,\xi_\omega),\label{eq:cem}
\end{align}
where $c^\top x$ is the capacity investment cost and $h(x,\xi_\omega)$ is the optimal cost of the operational subproblem with new capacity portfolio $x$ in scenario $\xi_\omega$. The details of the scenarios $\xi_\omega$ and the value function $h$  will be given in Section \ref{subsec:op_constraints}. 

\subsection{Detailed operational problem}
\label{subsec:op_constraints}

An operational scenario $\omega$ is defined by its data vector $\xi_\omega = [c^\g_\omega, a^\g_\omega, \overline{p}^\mathrm{d}_\omega]$ of the generator operating costs $c^\g_\omega\in\R^{T n^\g}$ over $T$ hours, the generators' hourly availability factors $a^\g_\omega\in\R^{T n^\g}$, and load levels $\overline{p}^\dd_\omega\in\R^{T {n^\dd}}$ of {$n^\dd$} loads. Each scenario $\omega$ has an operational subproblem, introduced below.


\subsubsection{Generation and Short-Duration Storage}\label{sec:gen_stor}
Generators satisfy the following standard operational constraints,
\begin{subequations}
    \label{eq:gen_limits}
    \begin{align}
        &p^\g_{\omega t} + r^\g_{\omega t} \leq a^\g_{\omega t} \odot (\w^\g + x^\g), \ \forall t\in[T], \label{eq:gen_power_reserves}
\\
        &p^\g_{\omega, t+1} - p^\g_{\omega t} \geq -R\odot(\w^\g + x^\g),\ \forall t\in[T], \label{eq:gen_rampdown}
\\
        &p^\g_{\omega, t+1} - p^\g_{\omega t} \leq R\odot(\w^\g + x^\g), \ \forall t\in[T], \label{eq:gen_rampup}
    \\
        &\sum\nolimits_{t\in[T]} e^\top p_{\omega t}^g \leq x^\mathrm{em}_\omega, \label{eq:gen_em}
    \\
        & p^\g_{\omega t}, r^\g_{\omega t} \geq 0, \ \forall t\in[T], \label{eq:gen_nonneg}
    \end{align}
\end{subequations}
where $p^\g_{\omega t}, r^\g_{\omega t}\in\R^{n^\g}$ are vectors of power generation and reserves at time $t$, and $\w^\g \in \R^{n^\g}$ are existing generator capacities. Eq. \eqref{eq:gen_power_reserves} limits the power output and reserve of each generator to its physical availability $a^g_{\omega t}$, where $\odot$ denotes element-wise product. Eq. \eqref{eq:gen_rampdown}-\eqref{eq:gen_rampup} enforce ramp-down and ramp-up limits, respectively, with the ramp rate vector $R$. Note that in \eqref{eq:gen_rampdown}-\eqref{eq:gen_rampup}, when $t=T$, {we let $p^\g_{\omega, T+1}$ stand for $p^\g_{\omega 1}$, creating the cyclic ``wrapping'' constraints described in Section \ref{sec:time_coupling}}. Eq. \eqref{eq:gen_em} uses an emissions factor vector $e \in \R^{n^\g}$ to limit total fossil generation to the allocated budget $x^\mathrm{em}_\omega$.

\edit{
\begin{lemma}[Year-long policy coupling]
    Problem \eqref{eq:cem} and Eq. \eqref{eq:gen_em} enforce the full-year policy constraint
    $$\sum\nolimits_{\omega\in\Omega} \sum\nolimits_{t\in[T]} e^\top p_{\omega t}^g \leq \overline{x}^\mathrm{em}.$$
\end{lemma}
\begin{proof}
This results directly from summing \eqref{eq:gen_em} over all $\omega \in \Omega$, and substituting $\sum_{\omega \in \Omega} \sum\nolimits_{t\in[T]} e^\top p_{\omega t}^g \leq \sum_{\omega\in\Omega} x^\mathrm{em}_\omega$ into problem \eqref{eq:cem}'s constraint $\sum_{\omega\in\Omega} x^\tem_\omega \leq \overline{x}^\tem$.
\end{proof}
}

Storage devices face power and energy constraints
\begin{subequations}
\label{eq:storage}
\begin{align}
    & p^{es}_{\omega t} = p_{\omega t}^\text{dis} - p_{\omega t}^\text{chg}, \ \forall t \in [T],  
\\
    & p^\mathrm{chg}_{\omega t} + p^\mathrm{dis}_{\omega t} + r^\text{dis}_{\omega t} \leq  \w^\esp + x^\esp, \quad \forall t \in [T], \  \label{eq:es_power} 
\\
    & q_{\omega t} \leq  \w^\ese + x^\ese, \ \forall t \in [T], \label{eq:es_energy}
\\
    & q_{\omega t} - r^\text{dis}_{\omega t} \geq 0, \ \forall t \in [T], \label{eq:es_energy_lb}
\\
    & q_{\omega, t+1} = q_{\omega,t} + p^\text{chg}_{\omega t} \eta^\es - p^\text{dis}_{\omega t} / \eta^\es, \ \forall t \in [T],  \label{eq:soc_continuity}
\\
    &  { q_{\omega 1} = q_{\omega, T+1} }
    \label{eq:con_soc0},
\\
    & q_{\omega t},
p^\mathrm{chg}_{\omega t},
p^\mathrm{dis}_{\omega t}, r^\text{dis}_{\omega t} \geq 0, \ \forall t \in [T],
    \label{eq:con_es_nonneg}
    \end{align}
\end{subequations}
where $p^\es_{\omega t}\in\R^{n^\es}$ is the net output vector from $n^\es$ storage devices at time $t$ composed of charging $p^\chg_{\omega t}$ and discharging $p^\dis_{\omega t}$ decisions, $r^\dis_{\omega t}$ are storage-provided reserves, $q_{\omega t} \in \R^{n^\es}$ are the states of charge, and $\w^\esp, \w^\ese \in \R^{n^\es}$ are existing storage power and energy capacities. Eq. \eqref{eq:es_power}-\eqref{eq:soc_continuity} limit the total usage of storage, accounting for withheld capacity for reserves and storage dynamics following standard linear constraints with efficiency $\eta^\es$. {Eq. \eqref{eq:es_power} addresses charge-discharge exclusivity based on Lemma \ref{eq:lemma_storage} below}. Constraint \eqref{eq:con_soc0} {enforces the cyclic constraint described in Section \ref{sec:time_coupling}}. A system reserve margin $\gamma^\dd$ is applied to the total load $\overline{p}^d_{\omega t}$,
\begin{align}
\label{eq:reserves}
    \mathbf{1}^\top r_{\omega t}^\g + \mathbf{1}^\top r_{\omega t}^\text{dis} \geq \gamma^\dd \mathbf{1}^\top \overline{p}^\dd_{\omega t}, \quad \forall t\in[T],
\end{align}
\rev{and other reserve types (e.g., downward regulation, spinning and non-spinning reserves) can also be added as additional LP constraints within the mathematical framework.}

{
\begin{lemma}
\label{eq:lemma_storage}
 Eq. \eqref{eq:es_power},\eqref{eq:con_es_nonneg} are equivalent to the continuous relaxation of charge-discharge disjunctive constraints, namely:
 \begin{align*}
p^\mathrm{chg}_{\omega t i} &\leq (\overline{w}^\esp_i + x^\esp_i) z_{\omega t i},
\\
p^\mathrm{dis}_{\omega t i} + r^\mathrm{dis}_{\omega t i} &\leq (\overline{w}^\esp_i + x^\esp_i) (1 - z_{\omega t i}).
 \end{align*}
\end{lemma}
\begin{proof}
A binary variable $z_{\omega t i} \in \{0,1\}$ can enforce non-simultaneous charge-discharge with the two inequalities stated above. Now consider the continuous relaxation $z_{\omega t i} \in [0,1]$. First, adding both inequalities produces Eq. \eqref{eq:es_power}, so it is a valid inequality. Conversely, given variables $(p^\chg_{\omega t i},p^\dis_{\omega t i}, r^\dis_{\omega t i})$ satisfying Eq. \eqref{eq:es_power} and \eqref{eq:con_es_nonneg}, we can easily construct a $z_{\omega t i} := p^\chg_{\omega t i} / (\overline{w}^\esp_{i} + x^\esp_{i})$. Since $p^\chg_{\omega t i},p^\dis_{\omega t i}, r^\dis_{\omega t i} \geq 0$, and thus $p^\chg_{\omega t i} \leq (\overline{w}^\esp_{i} + x^\esp_{i})$, we have $z_{\omega t i} \in [0,1]$. The first inequality $p^\mathrm{chg}_{\omega t i} \leq (\overline{w}^\esp_i + x^\esp_i) z_{\omega t i} = p^\mathrm{chg}_{\omega t i}$ is true, by construction. Next, Eq. \eqref{eq:es_power} implies $p^\dis_{\omega t i} + r^\dis_{\omega t i} \leq (\overline{w}^\esp_i + x^\esp_i) - p^\chg_{\omega t i} = (\overline{w}^\esp_i + x^\esp_i)(1 - z_{\omega t i})$, which is the second inequality. This proves the continuously-relaxed disjunctive inequalities are equivalent to Eq. \eqref{eq:es_power},\eqref{eq:con_es_nonneg}.
\end{proof}

A normalized approximation error metric for storage complementarity is $\min(p^\dis_{\omega t i}, p^\chg_{\omega t i}) / (\overline{w}^\esp_{i} + x^\esp_{i})$, which is 0 when fully disjunctive. We quantify this error to have an average and median of within 5\%, in our numerical test case, justifying our relaxed storage formulation in the CEM context.
}

{
\subsubsection{Unit Commitment (UC)}
\label{sec:uc}
A subset of generators are subject to UC constraints, which are clustered based on zone and technology, following \cite{palmintier2013heterogeneous}. These UC variables are continuously-relaxed, following \cite{poncelet2020unit, pecci2025regularized}. We have
\begin{subequations}    
\begin{align}
\label{eq:uc_z_x} 
    &\Delta^\uc \odot z^{\uc} = A^\mathrm{uc}(\w^\g + x^\g),
\\
\label{eq:uc_u_z}
    & 0 \leq u_{\omega t}, s^\uc_{\omega t}, d^\uc_{\omega t} \leq z^{\uc}, \quad \forall t \in [T],
\\
\label{eq:uc_min_power}
    &A^\mathrm{uc} p^\g_{\omega t} \geq \mu^{\text{min-}\uc} \odot \Delta^\uc \odot u_{\omega t}, \quad \forall t \in [T],
\\
\label{eq:uc_max_power}
    &A^\mathrm{uc} (p^\g_{\omega t} + r^\g_{\omega t}) \leq  \Delta^\uc \odot u_{\omega t}, \quad \forall t \in [T],
\\
\label{eq:uc_dynamics} 
    &u_{\omega , t+1} = u_{\omega t}  + s^\uc_{\omega t}  - d^\uc_{\omega t}, \quad \forall t \in [T],
\\
\label{eq:uc_min_up}
    &u_{\omega t  \mathfrak{c}} \geq \sum\nolimits_{\tau = 
[t - m^{s}_\mathfrak{c}]_1
}^t s^\uc_{\omega \tau \mathfrak{c}} , \quad \forall \mathfrak{c}\in \mathcal{G}^\uc, t \in [T],
\\
\label{eq:uc_min_down}
    &z^\uc_\mathfrak{c} - u_{\omega t \mathfrak{c}} \geq \sum\nolimits_{\tau = [t - m^{d}_\mathfrak{c}]_1}^t   d^\uc_{\omega \tau \mathfrak{c}}, \forall \mathfrak{c}\in \mathcal{G}^\uc, t \in [T],
\\
\label{eq:uc_cyclic}
    &u_{\omega 1} = u_{\omega, T+1}.
\end{align}
\end{subequations}
where \eqref{eq:uc_z_x} computes continuously-relaxed unit counts $z^\uc {\in} \mathbbm{R}^{|\mathcal{G}^\uc|}$ with nominal nameplate capacities $\Delta^\uc {\in} \mathbbm{R}^{|\mathcal{G}^\uc|}$. Here, $A^\uc {\in} \{0,1\}^{|\mathcal{G}^\uc| \times n^\g}$ assigns the set of $n^\g$ generators to the set $\mathcal{G}^\uc$ of zone-technology clusters, e.g., coal, combined-cycle, combustion turbine, etc. (generators without UC constraints would simply have 0 entries in $A^\uc$). Eq. \eqref{eq:uc_u_z} bounds the clustered variables $u_{\omega t},s^\uc_{\omega t},d^\uc_{\omega t} \in \mathbbm{R}^{|\mathcal{G}^\uc|}$, i.e., commitment status, startup, and shutdown decisions. Eq. \eqref{eq:uc_min_power}-\eqref{eq:uc_max_power} enforce minimum and maximum power constraints, using parameters $\mu^{\text{min-}\uc} {\in} \mathbbm{R}^{|\mathcal{G}^\uc|}$. Eq. \eqref{eq:uc_dynamics} enforces the startup and shutdown dynamics, while Eq. \eqref{eq:uc_min_up}-\eqref{eq:uc_min_down} enforce minimum up and down time constraints, where $m^s_\mathfrak{c},m^d_\mathfrak{c}$ are the minimum up and down time parameters for cluster $\mathfrak{c}$, and $[\cdot]_1$ stands for $\max(1,\cdot)$. Eq. \eqref{eq:uc_cyclic} is the cyclic constraint for UC status.
}

\subsubsection{Long-Duration Storage} 
\label{eq:ldes}
For reservoir hydropower, we adapt the formulation of \cite{pecci2025regularized,genx_hydro_reservoir_module},
\begin{subequations}
\label{eq:hydro}
\begin{align}
\label{eq:hy_minpower}
    &p^\hy_{\omega t} \geq \mu^{\text{min-}\hy}_{\omega t} \odot \overline{w}^\hy, \quad \forall t \in [T],
\\
\label{eq:hy_dynamics}
    &q^\hy_{\omega, t+1} \leq q^\hy_{\omega t} + A^\hy (\overline{\rho}^\hy_{\omega t}\odot \overline{w}^\hy - p^\hy_{\omega t}), \quad \forall t \in [T],
\\
\label{eq:hy_reservoir_ub} 
    &0 \leq q^\hy_{\omega t} \leq \overline{x}^\mathrm{inv}, \quad \forall t \in [T+1],
\\
\label{eq:hy_inv_start}
    & q^\hy_{\omega 1} = x^\mathrm{inv}_\omega, \qquad
    q^\hy_{\omega, T+1} = x^\mathrm{inv}_{\rev{\omega + 1}},
    \qquad 
    \rev{x^\mathrm{inv}_{|\Omega|+1} = x^\mathrm{inv}_1},
\end{align}
\end{subequations}
where $p^\hy_{\omega t} \in \mathbbm{R}^{n^\hy}$ are nodal hydro generation, lower-bounded in \eqref{eq:hy_minpower} with min-generation parameters $\mu_{\omega t}^{\text{min-}\hy}$, \rev{and already upper-bounded by} \eqref{eq:gen_power_reserves} as part of $p^\g$. Here $A^\hy {\in} \{0,1\}^{|\mathcal{G}^\hy| \times n^\hy}$ assigns $n^\hy$ hydro resources to the set $\mathcal{G}^\hy$ of zone-aggregated reservoirs. Eq. \eqref{eq:hy_dynamics} models the dynamics of reservoir levels $q^\hy_{\omega t} \in \mathbbm{R}^{|\mathcal{G}^\hy|}$, where the inequality allows for water spillage. \rev{S}easonal inflows (represented with coefficients $\overline{\rho}^\hy_{\omega t}$) are netted with hydro generation, and then zonally clustered. Eq. \eqref{eq:hy_reservoir_ub} bounds the reservoir states. \rev{In \eqref{eq:hy_inv_start}, the first constraint copies week $\omega$'s starting storage state as variable $x^\mathrm{inv}_\omega$, and the second constraint equates week $\omega$'s ending state to the beginning of next week ($\omega{+}1$) via $x^\mathrm{inv}_{\omega+1}$. The last equation wraps the end of the year to the beginning.}

\edit{
\begin{lemma}[Year-long LDES trajectory]
    Problem \eqref{eq:cem} and Eq. \eqref{eq:hydro} enforce a full year of hydro scheduling.
\end{lemma}
\begin{proof}
Problem \eqref{eq:cem} enforces Eq. \eqref{eq:hy_inv_start} for each $\omega$. By substituting each $x^\mathrm{inv}_{\omega}$, it is apparent they altogether enforce
$$q^\hy_{1,1},\  \left[\text{\emph{Week 1}}\right], \ q^\hy_{1,T+1}=q^\hy_{2,1}, \  \left[\text{\emph{Week 2}}\right], \ q^\hy_{2,T+1}=q^\hy_{3,1}, ...,$$
and \eqref{eq:hy_dynamics} enforces internal storage dynamics within each week. Thus, full-year hydro scheduling is decided consistently.
\end{proof}}

\subsubsection{Cycle-based DC Power Flow} \label{sec:cycle_impfeedback}
DC power flow satisfies standard nodal power balance. Denoting nodal net power injections as $p^\tni_{\omega t}\in\R^N$, we have for all times $t\in[T]$,
\begin{subequations}
\label{eq:nodal_dc_kcl}
    \begin{align}
     & p^\tni_{\omega t} = A^\g p^\g_{\omega t}  + A^\es p_{\omega t}^\text{es} 
     \,
     {-}
     \,
     A^\dc p^\dc_{\omega t} -
     A^\text{d}(\overline{p}_{\omega t}^\text{d} - p^\text{sh}_{\omega t}),   \label{eq:nodal_balance}
\\
\label{eq:pni_pbr}
    & p^\tni_{\omega t} = A^\br p^\br_{\omega t},
    \end{align}
\end{subequations}
where $A^\g {\in} \{0,1\}^{N\times n^\g},\ 
A^\es {\in} \{0,1\}^{N\times n^\es},\ 
A^\text{d} {\in} \{0,1\}^{N\times {n^\dd}}$ are incidence matrices for generators, storage, and loads, respectively, and $p^\sh_{\omega t}\in\R^{n^d}$ is the vector of load shedding at time $t$. The vector $p^\br_{\omega t} \in \R^b$ of AC branch flows and $p^\dc_{\omega t} \in \R^\beta$ of HVDC line flows are constrained by ratings,
\begin{align}
\label{eq:hard_branch_limits}
-(\w^\br + x^\br )&\le p^\br_{\omega t} \leq \w^\br + x^\br , \ \forall t \in [T],
\\
-\w^\dc &\le p^\dc_{\omega t} \leq \w^\dc, \ \forall t \in [T], \label{eq:dc_line_limits}
\end{align}
where $ \overline{\omega}^\br \in\R^b, \overline{\omega}^\dc \in\R^\beta$ are the existing capacities for AC and HVDC branches, respectively. 

Recently, \cite{kocuk2016cycle} discovers a more computationally efficient way to express DC power flow using cycle bases. As background, a \emph{cycle} is a sequence of distinct vertices ${\nu}_1,\dots,{\nu}_k$ where each consecutive pair 
$({\nu}_i,{\nu}_{i+1})$ is connected by an edge and the last vertex reconnects to the first. When combining two cycles, their edge-incidence vectors are added modulo 2 (denoted $\oplus$), so edges appearing in both cancel out; this produces an \emph{even-degree subgraph} in which every vertex is incident to an even number of edges. The set of all even-degree subgraphs forms the graph's \emph{cycle space}, a vector space over the field $\mathbbm{F}_2=\{0,1\}$. A \emph{cycle basis} is a set of linearly independent cycles spanning the cycle space \cite{diestel2024graph}.

Given a power network, we can find a directed cycle basis matrix $D \in \{-1,0,1\}^{n^c \times b}$, where $n^c = b - N + 1$ is the cycle space's dimension \cite{diestel2024graph}, and each row of $D$ describes a cycle's incidence vector. Then Kirchhoff's Voltage Law (KVL), i.e., the difference of voltage angles across an edge should sum to zero over all edges in a cycle, can be written as
\begin{align}
\label{eq:cycle_basis}
    & \sum_{j \in \mathcal{J}_\kappa} D_{\kappa j}\cdot \chi_j({x}^\br_j)\cdot p^\br_{\omega t j} = 0, \quad \forall \kappa \in [n^c], \ t \in [T],
\end{align}
where $\chi_j({x}^\br_j)$ is the impedance of branch $j$. It is proved in \cite{kocuk2016cycle} that DC power flow is equivalent to the set of constraints \eqref{eq:nodal_dc_kcl} plus \eqref{eq:cycle_basis} over a cycle basis. We use this \emph{cycle-based DC power flow} extensively in our model.

\subsubsection{Impedance Feedback}
Importantly, \eqref{eq:cycle_basis} expresses the branch impedance $\chi_j(x^\br_j)$ as a function of capacity $x^\br_j$. We model this relationship as a continuous function,
\begin{align}
\label{eq:law_of_parallel_circuits}
&\chi_j({x}^\br_j) = {\chi_j^0  {\w^\br_j}} / ({\w^\br_j + {x}^\br_j}),
\end{align}
based on the law of parallel circuits \cite{hagspiel2014cost}, where $\chi^0_j$ is the original branch impedance prior to expansion. In general, our framework accommodates any continuous relationship $\chi_j(\cdot)$. We term this co-dependence of impedance and the capacity decision \emph{impedance feedback}. The division in \eqref{eq:law_of_parallel_circuits} makes the pair \eqref{eq:cycle_basis}-\eqref{eq:law_of_parallel_circuits} nonlinear in $x^\br$. Impedance feedback is considered in \cite{neumann2019heuristics}, which sequentially solves a full capacity expansion LP, but without a theoretical convergence justification.

\subsubsection{Transmission Contingencies}\label{sec:trans_cont}
We consider $n{-}1$ preventive transmission contingencies defined over the set $\mathcal{B}\subseteq[b]$, which comprises non-islanding branches in the network (also known as non-bridge edges). Note that an edge is a bridge if and only if it is not contained in any cycle; so we construct $\mathcal{B}$ as the set of edges that appear at least once in a cycle basis. Define the \emph{full} set of contingency indices as
\begin{align}
\label{eq:J_full}
    \mathcal{J}^\full = \{ (t,i,j) \in [T] \times [b] \times \mathcal{B}: \  i\neq j \},
\end{align}
where each contingency $(t,i,j)$ is indexed by a triplet of time $t$, the monitored branch $i$, and a different contingency-outaged branch $j$. The $n{-}1$ security constraints are expressed as follows similarly to \cite{mehrtash2020security}, now using slack variables $s^\cc$. For all $(t,i,j) \in \mathcal{J}^\full$, it must hold that,
\begin{subequations}
\label{eq:contingencies}
\begin{align}
\label{eq:post_ctg_flow}
    p^\brc_{\omega tij} &= p^{\br}_{\omega ti} + \Lambda_{ij}({x}^\br) p^\br_{\omega tj},
\\
\label{eq:lodf_neg}
    p^\brc_{\omega tij} &\geq -\eta^\cc (\overline{w}^\br_i + x^\br_i) -  s^\cc_{\omega tij} , 
\\
\label{eq:lodf_pos}
    p^\brc_{\omega tij} &\leq \eta^\cc(\overline{w}^\br_i + x^\br_i) +  s^\cc_{\omega tij}, 
\\
    s_{\omega tij}^\cc &\geq 0,
    \label{eq:ctg_slack}
\end{align}
\end{subequations}
where $p^\brc_{\omega tij}$ is the branch flow of $i$ under contingency $j$, $\Lambda_{ij}(x^\br)$ is the line outage distribution factor (LODF) for branch $i$ under contingency $j$, and $\eta^\cc \geq 1$ is the scalar multiple for the post-contingency rating. The LODF matrix $\Lambda \in \R^{b \times b}$ can be constructed using the power transfer distribution factor (PTDF) matrix $\Phi \in \R^{b \times (N-1)}$, following\cite{castelli2024improved,ronellenfitsch2016dual}:
\begin{subequations}
\label{eq:network_matrices}
\begin{align}
\label{eq:Phi_matrix}
        \Phi({x}^\br) &= B({x}^\br) A [A^\top B({x}^\br) A]^{-1},
\\
\label{eq:LODF_equation}
        \Lambda({x}^\br) &= \Phi({x}^\br) A^\top  [ I - \operatorname{diag}(\Phi({x}^\br)A^\top )  ]^{-1},
\end{align}
\end{subequations}
where $B(x^\br) = \operatorname{diag}(\chi({x}^\br))^{-1}$ is the diagonal matrix of branch susceptances. In \eqref{eq:Phi_matrix}-\eqref{eq:LODF_equation}, $A \in \R^{b \times (N-1)}$ is defined as $(A^\br)^\top$ with its slack bus column removed, and $\operatorname{diag}$ in \eqref{eq:LODF_equation} denotes keeping the diagonal part of the matrix while zeroing the rest. Due to the matrix inversion, constraints \eqref{eq:network_matrices} add further nonlinearity to the impedance feedback effect. For notational simplicity \eqref{eq:LODF_equation} has all branches, while \eqref{eq:J_full} only uses non-islanding branches as contingencies.

\subsubsection{Overall Operational Subproblem}\label{sec:oper_subprob}
The operational problem's objective function consists of generator variable costs plus penalties from load shedding and branch limit violations
\begin{align}
\label{eq:subproblem_obj}
    z_\omega^\top y_\omega {:=}  \sum_{t=1}^T  [(c^\g_{\omega t})^\top p_{\omega t}^\g
    { + (c^\uc)^\top s^\uc_{\omega t}}
    + c^{\sh} \mathbf{1}^\top p^{\sh}_{\omega t}
    + c^\vio  \mathbf{1}^\top s^\cc_{\omega t} ],
\end{align}
where $y_\omega$ is the vector of all operational decisions introduced above, {$c^\uc \in \mathbbm{R}^{|\mathcal{G}^\uc|}$ is the vector of startup costs}, $c^\sh$ and $c^\vio$ are scalar penalty coefficients, and 
$\mathbf{1}$ is a vector of ones with appropriate dimension so $\mathbf{1}^\top p^{\sh}_{\omega t}$ and $\mathbf{1}^\top s^\cc_{\omega t}$ denote the sum of all components of $p^{\sh}_{\omega t}$ and $s^\cc_{\omega t}$.

Putting everything together, we can now precisely define the operational subproblem and feasible region for scenario $\omega$ as
\begin{align}
\label{eq:subproblem_full}
h(x,\xi_\omega)
:&= 
\min\nolimits_{y_\omega \in \mathcal{Y}(x, \xi_\omega)} \ z_\omega^\top y_\omega,
\\
\mathcal{Y}(x,\, \xi_\omega) :&=
\left\{
    y_\omega  : \;\eqref{eq:gen_limits}-\eqref{eq:network_matrices}
\right\}. \label{eq:Y}
\end{align}

\section{Algorithms}

The capacity expansion model \eqref{eq:cem} together with the scenario subproblems \eqref{eq:subproblem_full}-\eqref{eq:Y} impose severe computational challenges due to the huge scale and nonlinearity. In particular, the scenario subproblems have DC power flow over large nodal networks, a large number of time intervals across scenarios, and a large number of transmission contingencies. Moreover, impedance feedback introduces a difficult nonlinearity. We propose several algorithmic approaches to deal with these challenges: 1) At the highest level, we propose a linear approximation of the overall nonlinear capacity expansion model with gradually tightened relaxations (see \ref{sec:alg:approx_subprob} and \ref{sec:alg:oracle}) and then use a novel fixed-point algorithm to correct the nonlinear impedance feedback effect (\ref{sec:alg:fixed_point}); 2) We adopt a modified level-bundle method to solve the linear expansion model {with an inexact oracle to capture contingencies} (\ref{sec:alg:bundle}); 3) For the linear operational subproblems, we introduce a fast algorithm for the cycle-based DCOPF (\ref{sec:alg:cycleOPF}). {These concepts and their relationships are previewed in  Fig. \ref{fig:flowchart}.}

\begin{figure}[H]
    \centering
    \includegraphics[width=1\linewidth]{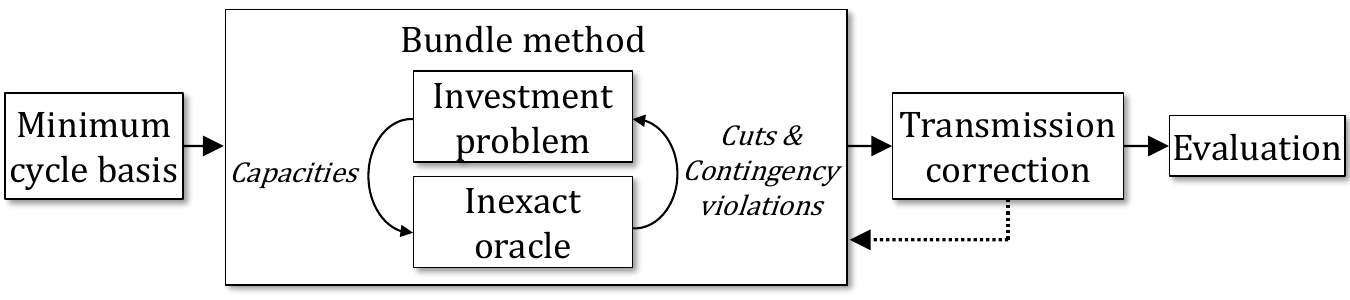}
    \caption{{Conceptual flowchart for the algorithmic components of CANOPI.}}
    \label{fig:flowchart}
\end{figure}

\subsection{Approximate operational subproblem}\label{sec:alg:approx_subprob}
To remove nonlinearity, we fix the variable $x^\br$ in \eqref{eq:cycle_basis}, \eqref{eq:law_of_parallel_circuits},  \eqref{eq:post_ctg_flow}, and \eqref{eq:network_matrices} as a parameter $\hat{x}^\br$, termed ``impedance-defining capacity''. For a fixed $\hat{x}^\br$, the values of $\chi(\hat{x}^\br)$, $B(\hat{x}^\br)$, and $\Phi(\hat{x}^\br)$ in \eqref{eq:network_matrices} can be pre-computed. Note that the $x^\br$ variable in \eqref{eq:hard_branch_limits} and
\eqref{eq:lodf_neg}-\eqref{eq:lodf_pos} is still treated as a first-stage variable, not as the fixed parameter $\hat{x}^\br$, for the purpose of generating cutting planes in the bundle method. {This fixed-impedance linearization yields a linear approximation that can be solved to desired optimality with valid lower and upper bounds, and which is verifiable via ex-post validation, unlike the nonlinear formulation which offers no such guarantees and remains computationally intractable.}

To improve tractability over the $O(b^2)$ possible contingency constraints, we introduce a relaxation of the operational feasibility sets $\mathcal{Y}$, by requiring the constraints \eqref{eq:contingencies} be satisfied only for a subset of contingencies $\mathcal{J}_\omega \subset \mathcal{J}^\full$ for each scenario $\omega$. We will later systematically tighten the relaxation. 
Combining the impedance-approximation and the contingency-relaxation, we define a revised operational feasibility set,
\begin{align}
\mathcal{Y}^r(\hat{x}^\br, x,\xi_\omega, \mathcal{J}_\omega) = \{ & y_\omega : \; \eqref{eq:gen_limits}-\eqref{eq:dc_line_limits} \text{ with } x, \xi_\omega,  
\\
    & \eqref{eq:cycle_basis}, \eqref{eq:law_of_parallel_circuits},
    \eqref{eq:post_ctg_flow},
    \eqref{eq:network_matrices} \text{ with } x^\br = \hat{x}^\br, \nonumber
\\
    & \hspace{-0.5cm}\eqref{eq:lodf_neg}-\eqref{eq:ctg_slack} \text{ with $x^\br$}, \forall (t,i,j)\in\mathcal{J}_\omega \}, \nonumber
\end{align}
which is a set of \emph{linear} constraints in $x$ for fixed $\hat{x}^\br$. This feasibility set also has complete recourse over $x$, i.e., $\mathcal{Y}^r$ is nonempty for any $x$, thanks to slack variables. Then the revised operational subproblem's optimal value function is
\begin{align}
\label{eq:subproblem}
h^r(\hat{x}^\br, x,\xi_\omega, \mathcal{J}_\omega)
:= 
\min_{y_\omega \in \mathcal{Y}^r(\hat{x}^\br, x,\xi_\omega, \mathcal{J}_\omega) } \ & z_\omega^\top y_\omega.
\end{align}

The linear approximation of CEM based on a particular $\hat{x}^\br$ (we {initialize} $\hat{x}^\br = 0$) can be expressed as a two-stage optimization, termed BUND (for bundle method), where each scenario $\omega$ considers the full contingency set $\cJ^\full$:
\begin{align}
(\bund) \ \min_{x \in \mathcal{X}}\ &c^\top x + \sum\nolimits_{\omega {\in \Omega}} h^r(\hat{x}^\br, x,\xi_\omega, \mathcal{J}^\full).\label{eq:bund}
\end{align}

In Section \ref{sec:alg:bundle}, we introduce a bundle method to obtain a highly accurate estimate of $\bund$'s optimal value. This is achieved by solving the subproblems \eqref{eq:subproblem} initially with $\mathcal{J}_\omega = \emptyset$ and systematically updating $\cJ_\omega$. Before this, we introduce an oracle for generating violated contingency constraints.

\subsection{Contingency constraint-generation oracle}\label{sec:alg:oracle}

Define the following oracle $\mathcal{O}$,
\begin{subequations}
\label{eq:oracle}
\begin{align}
    \mathcal{O}:\ &(\hat{x}^\br, x,\xi_\omega,\mathcal{J}_\omega) \mapsto (y^*_\omega, \theta^*_\omega, g^*_\omega, \sigma^\cc_\omega, \mathcal{J}'_\omega) \text{ s.t.}\notag
\\
\label{eq:O_minimizer}
    y^*_\omega &\in \argmin_{y_\omega \in \mathcal{Y}^r(\hat{x}^\br,x,\xi_\omega,\mathcal{J}_\omega)} z_\omega^\top y_\omega,
\\
\label{eq:O_subgradient}
    \theta^*_\omega &= z_\omega^\top y^*_\omega,
    \text{ and }
    g^*_\omega \in \partial_x h^r(\hat{x}^\br,x,\xi_\omega,\mathcal{J}_\omega),
\\
\label{eq:O_implied_slacks}
    \hat{s}^\cc_{\omega tij} &= \left[
       \left| p^\br_{\omega ti} + \Lambda_{ ij}(\hat{x}^\br) p^\br_{\omega tj} \right| -\eta^\cc  (\overline{w}^\br_i + x^\br_i)\right]^+,
\\
\label{eq:nu_c}
 \sigma^\cc_\omega &= c^\text{vio} \sum\nolimits_{(t,i,j) \in \mathcal{J}'_\omega} \hat{s}^\cc_{\omega tij},
\\
 \label{eq:Jnew}
   \mathcal{J}'_\omega &= \{(t,i,j)\in\mathcal{J}^\full \setminus \mathcal{J}_\omega : \hat{s}^\cc_{\omega tij} > 0 \},
\end{align}
\end{subequations}
where \eqref{eq:O_minimizer}-\eqref{eq:O_subgradient} find a minimizer $y^*_\omega$, the optimal value $\theta^*_\omega$, and a subgradient $g^*_\omega$ of the approximate operational subproblem $h^r(\hat{x}^\br, x, \xi_\omega, \cJ_\omega)$. The oracle also computes transmission slack in \eqref{eq:O_implied_slacks}, where $[\cdot]^+:=\max\{\cdot,0\}$, and returns the total contingency penalty $\sigma^\cc_\omega$ in \eqref{eq:nu_c} based on the index set $\mathcal{J}'_\omega$ of new violated contingencies in \eqref{eq:Jnew}.

\begin{proposition}
\label{prop:bounds} (Lower and upper bounds).
For any impedance-defining capacity $\hat{x}^\br$, scenario $\xi_\omega$, and a subset of contingencies $\mathcal{J}\subseteq\mathcal{J}^\full$, consider the following quantities computed at two capacity decisions $x, z\in\mathcal{X}$
\begin{subequations}
\begin{align}
\theta^\full &\leftarrow  h^r(\hat{x}^\br, x,\xi_\omega,\mathcal{J}^\full),
\\
(y^*,\theta^{*x},\cdot,\sigma^\cc,\cdot) &\leftarrow \mathcal{O}(\hat{x}^\br, x, \xi_\omega, \mathcal{J}),
\\
(\cdot,\theta^{*z},g^*,\cdot,\cdot) &\leftarrow \mathcal{O}(\hat{x}^\br, z, \xi_\omega, \mathcal{J}).
\end{align}
\end{subequations}
Then, $(\theta^{*z}, g^*)$ and $(\theta^{*x}, \sigma^\cc)$ provide valid lower and upper bounds on the $\mathcal{J}^\full$-optimal value $\theta^\full$ as
\begin{align}
\theta^{*z} + (g^*)^\top(x - z) &\leq \theta^\full
\leq \theta^{*x} + \sigma^\cc.
\end{align} 
\end{proposition}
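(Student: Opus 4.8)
The plan is to derive the two bounds separately, using only two structural facts: convexity of the operational value function in the capacity vector, and monotonicity of that value function in the contingency set. First I would record these facts. (a) For fixed $(\hat{x}^\br,\xi_\omega,\mathcal{J})$ the map $x\mapsto h^r(\hat{x}^\br,x,\xi_\omega,\mathcal{J})$ is convex and piecewise linear, because in \eqref{eq:subproblem} the capacity $x$ enters the linear program defining $\mathcal{Y}^r$ only through right-hand-side data of constraints such as \eqref{eq:gen_limits}, \eqref{eq:dc_line_limits}, and \eqref{eq:lodf_neg}--\eqref{eq:lodf_pos}, and never through the cost vector $z_\omega$; this is exactly what legitimizes the oracle's subgradient output $g^*_\omega\in\partial_x h^r$. (b) $h^r$ is nondecreasing in the contingency set: if $\mathcal{J}\subseteq\mathcal{J}'$, then any feasible point of the $\mathcal{J}'$-problem, restricted to the variables present in the $\mathcal{J}$-problem, is feasible for the latter and has no larger objective, since the extra slack terms $c^\vio s^\cc_{\omega tij}$ that are dropped are nonnegative; hence $h^r(\hat{x}^\br,x,\xi_\omega,\mathcal{J})\le h^r(\hat{x}^\br,x,\xi_\omega,\mathcal{J}')$.

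For the lower bound, note that by construction of the oracle $\theta^{*z}=h^r(\hat{x}^\br,z,\xi_\omega,\mathcal{J})$ and $g^*\in\partial_x h^r(\hat{x}^\br,z,\xi_\omega,\mathcal{J})$, so the subgradient inequality evaluated at $x$ gives $h^r(\hat{x}^\br,x,\xi_\omega,\mathcal{J})\ge\theta^{*z}+(g^*)^\top(x-z)$. Applying fact (b) with $\mathcal{J}\subseteq\mathcal{J}^\full$ yields $h^r(\hat{x}^\br,x,\xi_\omega,\mathcal{J})\le h^r(\hat{x}^\br,x,\xi_\omega,\mathcal{J}^\full)=\theta^\full$, and chaining the two inequalities gives $\theta^{*z}+(g^*)^\top(x-z)\le\theta^\full$.

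For the upper bound, the plan is to exhibit an explicit feasible point of the full problem from the oracle's minimizer $y^*$ returned by $\mathcal{O}(\hat{x}^\br,x,\xi_\omega,\mathcal{J})$. Keep every component of $y^*$ and, for each $(t,i,j)\in\mathcal{J}^\full\setminus\mathcal{J}$, additionally set $p^\brc_{\omega tij}:=p^\br_{\omega ti}+\Lambda_{ij}(\hat{x}^\br)p^\br_{\omega tj}$ as required by \eqref{eq:post_ctg_flow} with $x^\br=\hat{x}^\br$, and set the slack $s^\cc_{\omega tij}:=\hat{s}^\cc_{\omega tij}$ from \eqref{eq:O_implied_slacks}. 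Call this point $\tilde{y}$. Every constraint of $\mathcal{Y}^r(\hat{x}^\br,x,\xi_\omega,\mathcal{J}^\full)$ that does not involve the newly introduced variables holds because $y^*\in\mathcal{Y}^r(\hat{x}^\br,x,\xi_\omega,\mathcal{J})$; for the new triplets, \eqref{eq:lodf_neg}--\eqref{eq:lodf_pos} are equivalent to $|p^\brc_{\omega tij}|\le\eta^c(\overline{w}^\br_i+x^\br_i)+\hat{s}^\cc_{\omega tij}$, which holds by the definition of $\hat{s}^\cc_{\omega tij}$ as a positive part, and $\hat{s}^\cc_{\omega tij}\ge0$ gives \eqref{eq:ctg_slack}. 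Evaluating the objective \eqref{eq:subproblem_obj} at $\tilde{y}$ adds to $\theta^{*x}=z_\omega^\top y^*$ precisely $c^\vio\sum_{(t,i,j)\in\mathcal{J}^\full\setminus\mathcal{J}}\hat{s}^\cc_{\omega tij}$; since $\hat{s}^\cc_{\omega tij}=0$ whenever $(t,i,j)\notin\mathcal{J}'_\omega$ by \eqref{eq:Jnew}, this sum equals $\sigma^\cc$ as in \eqref{eq:nu_c}. Hence $\theta^\full\le z_\omega^\top\tilde{y}=\theta^{*x}+\sigma^\cc$.

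I expect the only real work to be the feasibility check in the upper-bound step: one must confirm that the LODF used in the oracle's implied-slack formula \eqref{eq:O_implied_slacks} is evaluated at the same impedance-defining capacity $\hat{x}^\br$ as the post-contingency flow equation \eqref{eq:post_ctg_flow} inside $\mathcal{Y}^r$ (it is), so that the constructed $p^\brc_{\omega tij}$ is simultaneously consistent with \eqref{eq:post_ctg_flow} and correctly clipped by $\hat{s}^\cc_{\omega tij}$, and that no constraint of the full operational feasibility set couples the added contingency variables to anything other than the base-case branch flows $p^\br$ already fixed in $y^*$. Everything else is a one-line application of convexity, the subgradient inequality, and monotonicity in $\mathcal{J}$.
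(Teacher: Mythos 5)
Your proposal is correct and follows essentially the same route as the paper's proof: convexity (the subgradient inequality at $z$) plus monotonicity of $h^r$ in the contingency set for the lower bound, and augmenting the oracle's minimizer with the implied slacks $\hat{s}^\cc$ to build a feasible point of the $\mathcal{J}^\full$-problem for the upper bound. Your version simply spells out details the paper leaves implicit, such as why the sum over $\mathcal{J}^\full\setminus\mathcal{J}$ reduces to the sum over $\mathcal{J}'_\omega$ in \eqref{eq:nu_c}.
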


\begin{proof}

First, $\theta^{*z} + (g^*)^\top(x - z) \leq h^r(\hat{x}^\br,x,\xi_\omega,\mathcal{J}) \leq \theta^\full$, where the first inequality is due to the convexity of $h^r$ in $x$ and the second inequality is due to the relaxation $\cJ\subseteq\cJ^\full$. For each of the previously ignored indices $(t,i,j)\in \mathcal{J}^\full\setminus \mathcal{J}$, the oracle constructs a contingency slack $\hat{s}^\cc$ that satisfies constraints \eqref{eq:contingencies}. Then the augmented operational solution $\hat{y} = (y^*, \hat{s}^\cc)$ is feasible for $\cY^r$ with $\cJ^\full$, and $\hat{y}$'s subproblem objective \eqref{eq:subproblem_obj} equals the relaxed objective $\theta^{*x}$ plus the new violation penalty $\sigma^\cc$. Thus, we have $\theta^\full \leq \theta^{*x} + \sigma^\cc$. 
\end{proof}

\subsection{Bundle method with interleaved constraint generation}
\label{sec:alg:bundle}
We develop a bundle-type method in Alg. \ref{alg:bundle} to solve $\bund$ \eqref{eq:bund}. It has the basic structure of a level-bundle method \cite{nesterov2018lectures} with two crucial differences. Each iteration $k$ builds cutting plane models $\hat{h}_{k\omega}(x)$ and $\hat{f}_k(x)$, which by Prop. \ref{prop:bounds} are lower approximations of operational objectives $h^r(\hat{x}^\br, x,\xi_\omega,\cJ^\full)$ and $\bund$'s overall objective \eqref{eq:bund}, respectively. This is achieved by solving, \emph{in parallel}, the linear approximate subproblems via the oracle $\cO$ in line \ref{alg:bundle:O}, obtaining cutting planes in line \ref{alg:bundle:cuts}, and aggregating in line \ref{alg:bundle:fk_hat}. Minimizing the lower approximation $\hat{f}_k$ in line \ref{alg:bundle:L} gives a lower bound $L_k$ of $\bund$, while by Prop. \ref{prop:bounds}, $f_k$ in lines \ref{alg:bundle:fk}-\ref{alg:bundle:U} gives an upper bound $U_k$. The algorithm terminates if $U_k$ and $L_k$ are sufficiently close. Otherwise, a level set of $\hat{f}_k$ is defined with a target level $\theta^{lev}_k$ as $\mathcal{L}(\hat{f}_k, \theta^{lev}_k) := \bigl\{ x \in \mathcal{X}: \  \hat{f}_k(x) \leq \theta^{lev}_k  \bigr\}$, where $\theta^{lev}_k$ is chosen as a convex combination of $U_k$ and $L_k$ in line \ref{alg:bundle:theta}. 

A crucial departure from the standard level-bundle method is in line \ref{alg:bundle:x}, where the next iterate $x_{k+1}$ is found as the analytic center of the level set $\mathcal{L}(\hat{f}_k, \theta^{lev}_k)$. In comparison, the standard level-bundle method projects $x_k$ to $\mathcal{L}(\hat{f}_k, \theta^{lev}_k)$ by solving a quadratic program, which is more computationally intensive \cite{pecci2025regularized}. Recall the analytic center of a convex set $\cZ$ is defined as $ac(\mathcal{Z}):= \argmax_{x\in\mathcal{Z}} F(x)$, where $F$ is a self-concordant barrier of $\cZ$ \cite{nesterov2018lectures}. This variant of the level-bundle method that leverages the analytic center cutting plane method (ACCPM) \cite{gondzio1996accpm} is proposed in \cite{zhang2025integrated}. 

We further improve upon the above level-bundle variant by integrating contingency generation in the process. Rather than fully solving each subproblem with $\cJ^\full$ before generating cuts for the capacity decision, the oracle $\cO$ returns newly identified contingency violations found from partial screening, which are added to the contingency list in line \ref{alg:bundle:add_ctg}. To our knowledge, this combination of adaptive or inexact oracles (based on systematic constraint tightening) with an analytic center bundle method has not been previously published.

\begin{algorithm}[t]
\caption{Bundle method with interleaved contingencies}
 \begin{algorithmic}[1]
 \label{alg:bundle}
 \renewcommand{\algorithmicrequire}{\textbf{Input:}}
 \renewcommand{\algorithmicensure}{\textbf{Output:}}
 \REQUIRE $\epsilon > 0$, $\alpha \in (0,1)$, {and initial $\hat{x}^\br$}. 
 \ENSURE $x^*$ and $y^*$.

\STATE Initialize bounds $L_0 \leftarrow 0, U_0 \leftarrow \infty$, and some $x_1 \in \mathcal{X}$.
\STATE Initialize models $\{\hat{h}_{0\omega} \leftarrow 0\}_\omega$ and sets $\{\mathcal{J}_{1\omega} \gets\emptyset \}_\omega$.

\FOR{$k=1,2,...$}

    \FOR{scenario $\omega \in \Omega$, in parallel}  \label{alg:bundle:forloop_start}

        \STATE $(y_{k\omega}, \theta_{k\omega}, g_{k\omega}, \sigma_{k\omega}, \mathcal{J}'_{k\omega}) \leftarrow\mathcal{O}({\hat{x}^\br}, x_k,\xi_\omega,\mathcal{J}_{k\omega})$. \label{alg:bundle:O}

        \STATE $\hat{h}_{k\omega}(x) \leftarrow \max\{ \hat{h}_{k-1,\omega}(x), \theta_{k\omega} + (g_{k\omega})^\top(x-x_k)   \}$.\label{alg:bundle:cuts}

        \STATE Add constraints \label{alg:J_new} $\mathcal{J}_{k+1,\omega} \leftarrow \mathcal{J}_{k\omega} \cup \mathcal{J}'_{k\omega}$. \label{alg:bundle:add_ctg}
    \ENDFOR \label{alg:bundle:forloop_end}

    \STATE $\hat{f}_k(x)\leftarrow c^\top x + \sum_\omega \hat{h}_{k\omega}(x)$.\label{alg:bundle:fk_hat}

    \STATE ${f}_k \leftarrow c^\top x_k + \sum_\omega [ \theta_{k \omega} + \sigma_{k\omega}]$.\label{alg:bundle:fk}

    \STATE $L_k \leftarrow \min_{x\in\mathcal{X}} \ \hat{f}_k(x)$ \label{alg:bundle:L}, and $U_k \leftarrow\min\{ U_{k-1}, f_k\}$.\label{alg:bundle:U}

    \STATE \algorithmicif\ $U_k = f_k$ \algorithmicthen\ $x^* \leftarrow x_k, \text{ and } y^* \leftarrow \{y_{k\omega}\}_\omega.$

    \STATE \algorithmicif\ $(U_k - L_k)/U_k < \epsilon$ \algorithmicthen\
       \textbf{return} $x^*, y^*$. \textbf{break}.

    \STATE $\theta_k^{lev}\leftarrow L_k + \alpha(U_k - L_k)$. \label{alg:bundle:theta}

    \STATE $x_{k+1} \leftarrow ac (
    \{ x \in \mathcal{X}: \  \hat{f}_k(x) \leq \theta_k^{lev}  \}
     )$. \label{alg:bundle:x}

\ENDFOR
\end{algorithmic}
\end{algorithm}

\begin{proposition}
\label{prop:bundle}
    Alg. \ref{alg:bundle} terminates in finite iterations and returns an $\epsilon$-optimal solution of the $\bund$ problem \eqref{eq:bund}.
\end{proposition}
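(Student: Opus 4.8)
The plan is to establish two facts: (i) the interleaved contingency generation stabilizes after finitely many iterations, so that from some iteration $K$ onward the algorithm behaves exactly like a standard analytic-center level-bundle method applied to $\bund$; and (ii) that standard method terminates finitely with an $\epsilon$-optimal point, invoking the convergence theory of the level-bundle variant from \cite{zhang2025integrated}. For (i), observe that each scenario's contingency set $\cJ_{k\omega}$ is nondecreasing in $k$ (line \ref{alg:bundle:add_ctg}) and is a subset of the finite set $\cJ^\full$; hence it is eventually constant, say equal to $\overline{\cJ}_\omega$ for all $k \ge K$. The key sub-claim is that once no scenario generates new violations, the oracle's reported bounds actually coincide with those for the full contingency set: if $\cJ'_{k\omega} = \emptyset$ for a given $x_k$, then by \eqref{eq:Jnew} every implied slack $\hat s^\cc_{\omega tij}$ is zero on $\cJ^\full \setminus \cJ_{k\omega}$, so $\sigma^\cc_{k\omega} = 0$ and, by the argument in the proof of Prop.~\ref{prop:bounds}, $\theta_{k\omega} = h^r(\hat x^\br, x_k, \xi_\omega, \cJ_{k\omega}) = h^r(\hat x^\br, x_k, \xi_\omega, \cJ^\full)$. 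So at such iterates the upper bound $f_k$ equals the true $\bund$ objective at $x_k$, and the cut $\theta_{k\omega} + (g_{k\omega})^\top(x - x_k)$ is a valid supporting cut for $h^r(\cdot, \cdot, \cdot, \cJ^\full)$ by convexity (Prop.~\ref{prop:bounds}).

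Next I would argue that after iteration $K$ the algorithm is a genuine level-bundle method for $\bund$: the model $\hat f_k$ is built from cuts that are all valid lower approximations of $\bund$'s objective (cuts generated before $K$ used smaller contingency sets, hence are still valid lower bounds by the relaxation inequality in Prop.~\ref{prop:bounds}), so $L_k \le \bund^* $ for all $k$; and $U_k$ is the running minimum of the $f_k$ values, each of which is the true objective at a feasible $x_k$ once $k \ge K$, hence $U_k \ge \bund^*$ eventually and $U_k$ is a valid upper bound throughout (the pre-$K$ values $f_k = c^\top x_k + \sum_\omega(\theta_{k\omega} + \sigma_{k\omega})$ are also genuine upper bounds on $\bund^*$ by the upper-bound half of Prop.~\ref{prop:bounds}, evaluated with $\cJ = \cJ_{k\omega}$). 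Thus $L_k$ and $U_k$ bracket $\bund^*$ at every iteration, so the termination test $(U_k - L_k)/U_k < \epsilon$ certifies $\epsilon$-optimality of the incumbent $x^*$ whenever it fires.

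It remains to show the test does fire in finitely many steps. Here I would invoke the finite-termination analysis of the analytic-center level-bundle method from \cite{zhang2025integrated, nesterov2018lectures}: once the cut-generating oracle is exact (which holds for $k \ge K$ by the sub-claim above, since then each $\cO(0, x_k, \xi_\omega, \overline{\cJ}_\omega)$ returns the exact value, subgradient, and a zero penalty for the full-contingency subproblem), the standard gap-reduction argument applies — each "null step" adds a cut that cuts off the analytic center of the current level set, and a bounded number of such steps forces a gap reduction by a fixed factor depending on $\alpha$, while the polytope $\cX$ is compact. Summing the geometric decrease of $U_k - L_k$ gives termination in $O(\log(1/\epsilon))$ bundle iterations after $K$, and $K$ itself is bounded by $\sum_\omega |\cJ^\full|$ since each pre-$K$ iteration adds at least one new contingency to some scenario.

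\textbf{Main obstacle.} The delicate point is handling the interaction between the two dynamics — contingency growth and level-set shrinkage — rigorously, since during the pre-$K$ phase the "upper bound" $U_k$ and the model $\hat f_k$ are being built with respect to moving, relaxed subproblems rather than $\bund$ itself. One must be careful that cuts accumulated under partial contingency sets remain valid (they do, by the relaxation inequality), that the incumbent update in line "if $U_k = f_k$" is well-defined, and that the analytic center in line \ref{alg:bundle:x} stays well-defined (the level set is nonempty because $L_k \le \theta_k^{lev}$ and has nonempty interior unless $U_k = L_k$, in which case the algorithm has already terminated). Establishing that $\theta_{k\omega}$ jumps to the exact full-contingency value precisely when $\cJ'_{k\omega} = \emptyset$ — rather than only in the limit — is the crux that lets the finite oracle-stabilization argument go through cleanly.
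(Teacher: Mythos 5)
Your bookkeeping is largely sound and overlaps with the paper's: the monotonicity of $\cJ_{k\omega}$, the validity of cuts generated under partial contingency sets, the observation that $\sigma_{k\omega}=0$ forces $\theta_{k\omega}=h^r(\hat{x}^\br,x_k,\xi_\omega,\cJ^\full)$, and the validity of $f_k$ as an upper bound all follow from Prop.~\ref{prop:bounds} exactly as you say. The gap is in the termination argument itself, in two places. First, you delegate finite termination after stabilization to the convergence theory of \cite{zhang2025integrated} and the level method of \cite{nesterov2018lectures}; but the paper explicitly notes that \cite{zhang2025integrated}'s proof requires evaluating the oracle at additional \emph{unstabilized} iterates, which Alg.~\ref{alg:bundle} never does, and the classical level-method analysis hinges on the quadratic projection step, not the analytic center. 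The convergence of the analytic-center-only, stabilized-points-only variant is precisely the new content of this proposition, so it cannot be imported. Second, your claim that the stabilization time $K$ is bounded by $\sum_\omega|\cJ^\full|$ is false: iterations at which no scenario adds a contingency can be interleaved arbitrarily with iterations that do (violations reappear as $x_k$ moves), so pre-stabilization iterations need not each add a contingency, and your two-phase decomposition leaves those interleaved iterations uncontrolled.

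The paper avoids both problems by replacing the two-phase structure with a dichotomy applied at \emph{every} iteration. An iteration is Type I if some scenario either returns $\cJ'_{k\omega}\neq\emptyset$ or produces a cut with $\hat{h}_{k\omega}(x_k)>\hat{h}_{k-1,\omega}(x_k)$; such iterations are finite because there are finitely many subsets $\cJ_\omega\subseteq\cJ^\full$, each associated LP has finitely many faces, and each subgradient cut supports at least one face, so only finitely many cuts can strictly improve the model at the point where they are generated. In every other (Type II) iteration, $\sigma_{k\omega}=0$ and $\hat{h}_{k\omega}(x_k)=\hat{h}_{k-1,\omega}(x_k)$ for all $\omega$, which gives $f_k\le\hat{f}_{k-1}(x_k)\le\theta^{lev}_{k-1}$ since $x_k$ was chosen in the previous level set, and hence $U_k-L_k\le\alpha(U_{k-1}-L_{k-1})$ directly --- a geometric contraction established with no appeal to external bundle-method complexity theory. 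To salvage your route you would need to prove from scratch that the analytic-center level variant with an exact oracle terminates, and separately control the non-adding iterations occurring before stabilization; the paper's face-counting argument does both jobs simultaneously.
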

\begin{proof}
    At iteration $k$, if the gap $(U_k-L_k)/U_k$ has reached the desired tolerance $\epsilon$, then the algorithm terminates with an $\epsilon$-optimal solution because the lower and upper bounds are valid by Prop. \ref{prop:bounds}. Otherwise, there are two possible iteration types, discernible in lines \ref{alg:bundle:cuts}-\ref{alg:bundle:add_ctg}. \textbf{Type~I}: At least one subproblem $\omega$ either (a) adds a cut that locally improves $\hat{h}_{k,\omega}(x_k) > \hat{h}_{k-1,\omega}(x_k)$, or (b) generates new constraints with $\mathcal{J}'_{k\omega} \neq \emptyset$. There are a finite number of possible subsets $\mathcal{J}_\omega \subseteq \mathcal{J}^\full$, and each $\mathcal{J}_\omega$-parameterized LP \eqref{eq:subproblem} has a finite number of \emph{faces} since each face is defined by a set of active constraints. Recall that faces can range in dimension, from vertices (0), edges (1), polygons (2), ... , up to facets with $\dim(\cY^r(...)) - 1$. Then, each subgradient cut in line \ref{alg:bundle:cuts} contains at least one of these faces. So the number of Type~I iterations, where some cut adds at least one new face to the cutting plane model, must be finite. In a \textbf{Type~II} iteration: for all scenarios $\omega$, we have both (a') $\hat{h}_{k\omega}(x_{k}) = \hat{h}_{k-1, \omega}(x_{k})$, and (b') $\sigma_{k\omega} = 0$. Thus,
    \begin{align*}
        f_{k}
    &=
        c^\top x_{k} + \sum\nolimits_{\omega \in \Omega} \theta_{k\omega}
    \leq 
        c^\top x_{k} + \sum\nolimits_{\omega \in \Omega} \hat{h}_{k\omega}(x_{k})
\\
    &=
        c^\top x_{k} + \sum\nolimits_{\omega \in \Omega} \hat{h}_{k-1,\omega}(x_{k})
    =
        \hat{f}_{k-1}(x_{k})
    \leq \theta^{lev}_{k-1},
    \end{align*}
    where the first equality follows from $f$'s definition in line \ref{alg:bundle:fk} and assumption (b'), the first inequality uses the max operator in line \ref{alg:bundle:cuts}, the second equality applies assumption (a'), the third equality uses $\hat{f}$'s definition in line \ref{alg:bundle:fk_hat}, and finally the last inequality follows from the membership of $x_{k}$ in $\mathcal{L}(\hat{f}_{k-1}, \theta^{lev}_{k-1})$ from line \ref{alg:bundle:x}. This fact means $U_{k} \leq f_{k} \leq L_{k-1} + \alpha(U_{k-1} - L_{k-1})$. Further, the nondecreasing lower bound $L_{k} \geq L_{k-1}$ implies $U_{k} - L_{k} \leq \alpha (U_{k-1} - L_{k-1})$, i.e., the gap has improved geometrically.    Then $\epsilon$ convergence is guaranteed after
    $K > \log\left(\frac{1}{\epsilon} \cdot \frac{U_1 
- L_1}{f^*} \right)   /  \log(\frac{1}{\alpha})$
    iterations of Type II. Thus Alg. \ref{alg:bundle} converges.
\end{proof}

Unlike \cite{zhang2025integrated}'s method and convergence proof, which must evaluate oracles  at additional unstabilized Benders iterates (requiring further solve times), Prop. \ref{prop:bundle} proves the convergence of our hybrid level-ACCPM method where \emph{only} the stabilized points $x_k$ are evaluated with subproblem oracles. {We also remark that \cite{pecci2025regularized} omits a convergence proof.}

\subsection{Transmission correction for impedance feedback}
\label{sec:alg:fixed_point}

Alg. \ref{alg:bundle} solves $\bund$ \eqref{eq:bund}  to get $(x^*,y^*)$. We will fix the non-transmission decisions $(x^\nbr_*, y^\nbr_*)$ from $(x^*, y^*)$. Then we wish to make the branch capacities $x^\br$ and the impedance-defining parameters $\hat{x}^\br$ consistent, i.e., $x^\br = \hat{x}^\br$, in order to satisfy the impedance feedback constraints \eqref{eq:cycle_basis}, \eqref{eq:law_of_parallel_circuits}, \eqref{eq:post_ctg_flow}, \eqref{eq:network_matrices}. We do this with an iterative transmission correction process (CORR), illustrated in Fig. \ref{fig:RTEP_iterative}. First, we set the impedance-defining parameter $\hat{x}^\br$ to the $x^\brs$ solution. Then, we re-optimize branch capacities $x^\br$ to minimize capacity and contingency costs. We call this re-optimization the \emph{restricted transmission expansion problem} (RTEP), which produces branch capacities $\tilde{x}^\br$. Then we update $\hat{x}^\br$ to $\tilde{x}^\br$, and we continue to re-solve RTEP until $\tilde{x}^\br$ and $\hat{x}^\br$ converge. {This overall BUND-CORR procedure can be repeated multiple times, by feeding CORR's final output $\tilde{x}^\br$ back into BUND's initial impedance-defining capacities $\overline{w}^\br$ (while keeping track of the branch investments' sunk cost, which is needed because the BUND objective \eqref{eq:bund} only models the investment cost $(c^\br)^\top x^\br$ for new upgrades relative to $\overline{w}^\br$).}

\begin{figure}[ht]
    \centering
    \includegraphics[width=0.90\linewidth]{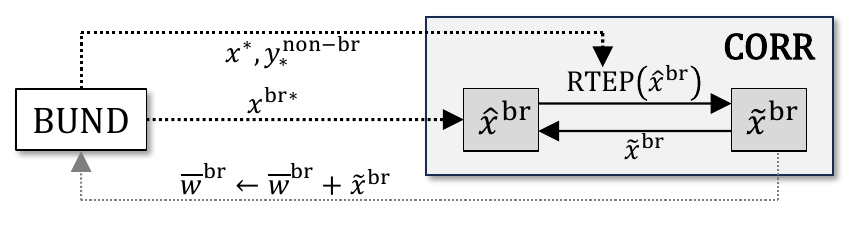}
    \vspace{-10pt}
    \caption{Iterative procedure CORR to re-optimize a consistent $\hat{x}^\br = \tilde{x}^\br$. {The inputs $(x^*, y^\nbr_*)$} are fixed across all iterations of RTEP, while $\hat{x}^\br$ is updated iteratively.}
    \label{fig:RTEP_iterative}
\end{figure}

Given a fixed non-transmission solution $(x^\nbr_*, y^\nbr_*)$ {and branch solution $x^{\br*}$ (which is taken as an element-wise lower-bound target)}, RTEP is parametrized by a general $\hat{x}^\br$,
\begin{align}
\label{eq:restricted_tep}
\text{(RTEP}) &  \ 
 \min_{x^\br, \{y^\br_\omega\}_\omega} \ (c^\br)^\top x^\br + c^\vio \sum\nolimits_{\omega {\in \Omega}} \mathbf{1}^\top s^\cc_\omega  
\\
\text{s.t.} & \quad {x^{\br*}} \leq x^\br \leq \overline{x}^\br, \nonumber
\\
\multicolumn{2}{c}{
    $({y}_{*\omega}^\nbr, y_\omega^\br) \in 
\mathcal{Y}^r\!\left(\hat{x}^\br,(x^\nbr_*,x^\br),\xi_\omega,\cJ^\full\right), \ \forall \omega {\in\Omega},$
} \nonumber
\end{align}
where the objective preserves relevant cost terms from \eqref{eq:bund}, and $y^\br_\omega=(p^\br_\omega, p^\brc_\omega, s^\cc_\omega)$ are re-calculated power flow variables. {Note that RTEP is always \emph{anchored} to BUND's transmission decision $x^{\br*}$ as a lower bound.}
{In the overall CORR framework (Fig. \ref{fig:RTEP_iterative}), the linearized capacity expansion model is followed by re-optimizing transmission decisions and updating impedance-related parameters in a self-consistent way. In contrast, in \cite{neumann2019heuristics} the full-scale capacity expansion LP is followed by merely a simple impedance update (without re-optimizing transmission). In other words, \cite{neumann2019heuristics}'s approach relies purely on the full-scale LP (which is expensive to solve) to drive transmission decisions, while the CANOPI framework provides two opportunities to improve transmission: both BUND and CORR. This algorithmic choice is enabled by computational speed:} RTEP can be solved by Alg. \ref{alg:E_function}, which only requires lightweight algebraic operations.

\begin{proposition}
    Alg. \ref{alg:E_function} {gives an optimal solution to} RTEP \eqref{eq:restricted_tep}.
\end{proposition}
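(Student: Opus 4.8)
The plan is to show that, with the non-transmission solution $(x^\nbr_*, y^\nbr_*)$ and the impedance-defining parameter $\hat{x}^\br$ held fixed, RTEP \eqref{eq:restricted_tep} collapses to a separable collection of one-dimensional convex piecewise-linear minimizations, one per branch, each solved in closed form by the elementary operations of Alg.~\ref{alg:E_function}.

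First I would argue that the power-flow variables are no longer free. Fixing $(x^\nbr_*, y^\nbr_*)$ makes every nodal injection $p^\tni_{\omega t}$ in \eqref{eq:nodal_balance} a constant; fixing $\hat{x}^\br$ makes the impedances $\chi_j(\hat{x}^\br_j)$ from \eqref{eq:law_of_parallel_circuits}, the cycle coefficients in \eqref{eq:cycle_basis}, and the LODF entries $\Lambda_{ij}(\hat{x}^\br)$ from \eqref{eq:network_matrices} constants. By the cycle-basis equivalence recalled in Section~\ref{sec:cycle_impfeedback}, on the connected network the system \eqref{eq:pni_pbr}--\eqref{eq:cycle_basis} has a unique solution $p^\br_{\omega t}$, and \eqref{eq:post_ctg_flow} then determines $p^\brc_{\omega tij}$. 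So the only genuine decision variables remaining in RTEP are $x^\br$ and the contingency slacks $s^\cc$.

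Next I would eliminate the slacks and decompose. Because $c^\vio>0$ and \eqref{eq:lodf_neg}--\eqref{eq:lodf_pos} only lower-bound each slack, every optimal solution has $s^\cc_{\omega tij} = [\,|p^\brc_{\omega tij}| - \eta^c(\w^\br_i + x^\br_i)\,]^+$, the implied slack \eqref{eq:O_implied_slacks}; and \eqref{eq:hard_branch_limits} with the fixed $p^\br_{\omega t}$ is just the box $\underline{x}^\br_j \le x^\br_j \le \overline{x}^\br_j$ with $\underline{x}^\br_j := \max\{0, \max_{\omega,t}|p^\br_{\omega tj}| - \w^\br_j\}$. Substituting, RTEP becomes $\min\sum_j \phi_j(x^\br_j)$ over this box, where $\phi_i(\xi) := c^\br_i\xi + c^\vio\sum_{(\omega,t,j):(t,i,j)\in\cJ^\full}[\,|p^\brc_{\omega tij}| - \eta^c(\w^\br_i + \xi)\,]^+$. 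The essential point is that the capacity $x^\br_i$ enters a penalty term only through contingencies monitoring branch $i$, so the objective is a sum of independent univariate convex piecewise-linear functions and the problem splits branch-by-branch.

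Finally I would match each scalar subproblem to Alg.~\ref{alg:E_function} and reconstruct a feasible optimum. The right-derivative of $\phi_i$ at $\xi$ equals $c^\br_i - \eta^c c^\vio\,\bigl|\{(\omega,t,j):|p^\brc_{\omega tij}|/\eta^c - \w^\br_i > \xi\}\bigr|$, so by convexity the minimizer over $[\underline{x}^\br_i,\overline{x}^\br_i]$ is obtained by sorting the breakpoints $[\,|p^\brc_{\omega tij}|/\eta^c - \w^\br_i\,]^+$ in decreasing order, advancing until the count of still-binding contingencies falls to at most $c^\br_i/(\eta^c c^\vio)$, and clipping to the box --- exactly the lightweight algebra in Alg.~\ref{alg:E_function}. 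Rebuilding $p^\br_\omega, p^\brc_\omega$ from the linear DC equations and $s^\cc_\omega$ from \eqref{eq:O_implied_slacks} gives a point of $\mathcal{Y}^r(\hat{x}^\br,(x^\nbr_*,x^\br),\xi_\omega,\cJ^\full)$ whose cost equals $\sum_i\phi_i^\star$, so global optimality follows by separability and convexity. I expect the main obstacle to be twofold: rigorously pinning down uniqueness of the DC flows (invoking the full-rank argument behind \cite{kocuk2016cycle} and the exclusion of islanding branches via $\mathcal{B}$, so post-contingency flows are well defined), and verifying that Alg.~\ref{alg:E_function}'s sweep returns the correct minimizer of each convex piecewise-linear $\phi_i$ in all corner cases --- ties among breakpoints, the unconstrained optimum lying outside the box, and the degenerate case $\underline{x}^\br_i > \overline{x}^\br_i$ signalling base-case infeasibility.
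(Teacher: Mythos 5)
Your proposal is correct and follows essentially the same route as the paper's proof: fix the injections and impedance parameters so the pre- and post-contingency flows are determined, substitute the implied slacks, reduce RTEP to a separable family of univariate convex piecewise-linear problems, and recover the minimizer via the breakpoint-counting (order-statistic) rule followed by projection onto the box. Your right-derivative argument is the same stationarity condition the paper states via the subdifferential of $[\cdot]^+$, and the corner cases you flag (breakpoint ties, clipping) are handled identically by the paper's projection step.
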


\begin{proof}
Alg. \ref{alg:E_function} only needs to consider $p^\tnis_\omega \in y^\nbr_\omega$ since the other components of $y^\nbr_\omega$ remain feasible to the non-power-flow constraints \eqref{eq:gen_limits}-\eqref{eq:pni_pbr}. Then, given $\{p^\tnis_\omega\}_\omega$ and $\hat{x}^\br$ as inputs, the power flows $\hat{p}^\br$ are uniquely determined by the standard PTDF mapping in line \ref{alg:E:apply_ptdf_formulation}, where the $[2:N]$ indices omit the slack bus. This PTDF mapping is equivalent to the DC power flow constraints \eqref{eq:nodal_dc_kcl} and \eqref{eq:cycle_basis}. Post-contingency power flows $\hat{p}^\brc$ are similarly determined in line \ref{alg:E:p_brc}.

At this point, both pre- and post-contingency power flows are determined, and for RTEP it only remains to optimize the tradeoff between costly transmission investments $x^\br$ versus violations $\{s^\cc_\omega\}_\omega$. Lines \ref{alg:E:deltas_b} and \ref{alg:E:max_base} calculate a lower bound $x^\text{br-lb}_i$ to satisfy base-case feasibility constraints \eqref{eq:hard_branch_limits} across all scenarios and time periods. For contingencies, line \ref{alg:E:deltas_c} calculates $\hat{\delta}^\cc_{\omega tij}$ to identify the minimal contingency slack that satisfies constraints \eqref{eq:contingencies} as $s^\cc_{\omega t i j} = [\hat{\delta}^\cc_{\omega tij} - \eta^\cc x^\br_i]^+$, which is a function of $x^\br_i$. So the $y^\br_\omega$ variables can be projected out from RTEP, leaving an equivalent problem \eqref{eq:restricted_tep_separable} involving only $x^\br$, which is now \emph{separable} across branches $i$, {by considering branch $i$'s possible contingencies $\mathcal{J}^\br := \Omega \times [T] \times \mathcal{B}$}:
\begin{subequations}
\label{eq:restricted_tep_separable}
\begin{align}
\min_{{ x^\br_i }}\ \  & c^\br_i  x^\br_i +  c^\vio
\sum\nolimits_{
(\omega,t,j) {\in \mathcal{J}^\br}
} 
\left[\hat{\delta}^\cc_{\omega tij} {-} \eta^\cc x^\br_i\right]^+,
\\
{\text{s.t.}} \ \ &
{x^\br_i \in [\max (x^{\br*}_i, x^\text{br-lb}_i),\    \overline{x}^\br_i] }
,
\end{align}
\end{subequations}
where the dependence on $\hat{x}^\br$ is embedded in $x^\text{br-lb}_i$ and $\hat{\delta}^\cc_{\omega tij}$. The subdifferential of $[\cdot]^+$ equals: $\{0\}$ when the argument is negative, $[0, 1]$ when $0$, and $\{1\}$ when positive. So the unconstrained optimality condition for \eqref{eq:restricted_tep_separable} is
\begin{align}
    \frac{c^\br_i}{\eta^\cc  c^\vio}
        \in
         \sum_{(\omega,t,j) {\in \mathcal{J}^\br} } \left[
         {
                \mathbbm{1}( 
                \frac{
                    {\hat{\delta}^\cc_{\omega t i j}}
                }
                {\eta^\cc}
                {>} 
                x^\br_i ),
                \ 
                \mathbbm{1}( 
                \frac{
                    {\hat{\delta}^\cc_{\omega t i j}}
                }
                {\eta^\cc}
                {\geq}   x^\br_i)
            }
            \right], \label{eq:corr_stationarity}
\end{align}
where the $\mathbbm{1}$ indicators count the number of terms in \eqref{eq:restricted_tep_separable}'s summation which have nonzero derivative. The expression in \eqref{eq:corr_stationarity} is a step function of $x^\br_i$ that decrements with a vertical segment at every breakpoint in the array $v_i = \{\hat{\delta}^\cc_{\omega tij} / \eta^\cc \}_{\omega t j}$. Thus, starting from the right limit where \eqref{eq:corr_stationarity}'s expression is 0 at $x^\br_i \to \infty$, assigning $x^\br_i$ to the $r_i := c^\br_i / (\eta^\cc c^\vio)$ largest breakpoint reaches the correct number of steps and satisfies \ref{eq:corr_stationarity}'s unconstrained optimality condition. This calculation is performed by lines \ref{alg:E:c_ratios}-\ref{alg:E:uncon_opt} in Alg. \ref{alg:E_function}. Finally, line \ref{alg:E:con_opt} projects the unconstrained optimal solution $x^\text{opt}$ onto the feasible interval $[{\max(x^{\br*}_i, } x^\text{br-lb}_i), \overline{x}^\br_i]$ from \eqref{eq:restricted_tep_separable}. This solves RTEP.
\end{proof}


\begin{algorithm}[t]
\caption{Function $E$ for restricted transmission expansion}
 \begin{algorithmic}[1]
 \label{alg:E_function}
 \REQUIRE initial $\hat{x}^\br$ and nodal net injections $\{ p^\tnis_\omega \}_\omega$.
 \ENSURE  updated $\tilde{x}^\br \in \R^b$.
  
  \FOR {$i \in [b]$}

      \FOR{$\omega \in \Omega, \ t\in[T]$}
           \STATE
           \label{alg:E:apply_ptdf_formulation}
           $\hat{p}^\br_{\omega t i} \gets \Phi_i(\hat{x}^\br) p_{\omega t,[2:N]}^\tnis.$ 
             \STATE
         \label{alg:E:p_brc}
         $\hat{p}^\brc_{\omega tij} \gets 
                    \hat{p}^\br_{\omega t i} + \Lambda_{ij}(\hat{x}^\br) \hat{p}^\br_{\omega t j}, \forall j \in \mathcal{B}.$ 
                   
           \STATE
           \label{alg:E:deltas_b}
           $\hat{\delta}_{\omega t i}^\text{base} \gets 
         \left[
             | \hat{p}^\br_{\omega t i}   | - \overline{w}^\br_i
         \right]^+.$

           \STATE
           \label{alg:E:deltas_c}
           $\hat{\delta}^\cc_{\omega tij} \gets 
                    \left[
                        | \hat{p}^\brc_{\omega tij} | -\eta^\cc \overline{w}^\br_i
                    \right]^+, \forall j \in \mathcal{B}.$ 
    \ENDFOR

     \STATE
     \label{alg:E:max_base}
     $x^\text{br-lb}_i \gets \max_{\omega \in \Omega, t\in[T]} \{ \hat{\delta}_{\omega t i}^\text{base} \}$

     \STATE
     \label{alg:E:c_ratios}
     $r_i \gets  \left\lceil c_i^\br  / (\eta^\cc c^\text{vio}) \right\rceil$, and $v_i \gets $ array $\{\hat{\delta}^\cc_{\omega tij} / \eta^\cc \}_{\omega t j}.$

     \STATE
     \label{alg:E:uncon_opt}
     $x^\text{opt}_i \gets$ the $r_i$-th largest value in array $v_i.$

     \STATE
     \label{alg:E:con_opt}
     $\tilde{x}^\br_i \gets \min\left\{
            \max\{
                 {x^{\br*}_i,}
                 x^\text{br-lb}_i,
                 x^\text{opt}_i
               \}, \
            \overline{x}^\br_i
            \right\}.$
  \ENDFOR
  
 \RETURN $\tilde{x}^\br$.
 \end{algorithmic}
\end{algorithm}

Alg. \ref{alg:E_function} takes $\hat{x}^{\br}$ as input and computes an optimal solution $\tilde{x}^{\br}$ of RTEP. This defines a function, which we denote as $\tilde{x}^{\br}=E(\hat{x}^{\br})$. Using this notation, the CORR procedure in Fig. \ref{fig:RTEP_iterative}
describes a \emph{fixed-point iteration}: $\hat{x}^{\br}_{k+1}=E(\hat{x}^{\br}_k)$. We now show that $E$ indeed has a fixed point.

\begin{proposition} \label{prop:E_fixed_point}
A fixed point $\hat{x}^\br = E(\hat{x}^\br)$ exists.
\end{proposition}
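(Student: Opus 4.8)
The plan is to invoke Brouwer's fixed-point theorem on the box $\mathcal{C}:=[0,\overline{x}^\br]=\prod_{i\in[b]}[0,\overline{x}^\br_i]\subset\R^b$, which is nonempty, compact, and convex. Two properties then suffice: (i) $E$ maps $\mathcal{C}$ into itself, and (ii) $E$ is continuous on $\mathcal{C}$. Property (i) is immediate from line~\ref{alg:E:con_opt} of Alg.~\ref{alg:E_function}: each coordinate $\tilde{x}^\br_i$ is obtained by clamping $x^\text{opt}_i$ to $[x^\text{br-lb}_i,\overline{x}^\br_i]$, and since $x^\text{br-lb}_i\ge 0$ (it is a maximum of the nonnegative quantities $\hat{\delta}^\text{base}_{\omega t i}$), the result lies in $[0,\overline{x}^\br_i]$ regardless of the value of $x^\text{opt}_i$; hence $E(\mathcal{C})\subseteq\mathcal{C}$.

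Property (ii) is the substance of the proof and is established by tracing the dependence of Alg.~\ref{alg:E_function} on $\hat{x}^\br$ line by line. Each susceptance entry is governed by $\chi_j(\hat{x}^\br_j)=\chi^0_j\overline{w}^\br_j/(\overline{w}^\br_j+\hat{x}^\br_j)$, which is continuous and strictly positive on $\mathcal{C}$, so $B(\hat{x}^\br)=\operatorname{diag}(\chi(\hat{x}^\br))^{-1}$ is continuous; the matrix $A^\top B(\hat{x}^\br)A$ is symmetric positive definite (as $B(\hat{x}^\br)\succ 0$ and $A$ has full column rank $n-1$ for a connected network), so by continuity of matrix inversion on the open set of invertible matrices the PTDF map \eqref{eq:Phi_matrix} is a continuous matrix-valued function of $\hat{x}^\br$ on $\mathcal{C}$. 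Only the LODF entries $\Lambda_{ij}(\hat{x}^\br)$ with $j\in\mathcal{B}$ appear in Alg.~\ref{alg:E_function} (line~\ref{alg:E:p_brc} ranges over $j\in\mathcal{B}$), and for a non-bridge $j$ the scalar $1-(\Phi(\hat{x}^\br)A^\top)_{jj}$ is nonzero throughout $\mathcal{C}$, so $\Lambda_{ij}(\hat{x}^\br)=(\Phi(\hat{x}^\br)A^\top)_{ij}/\bigl(1-(\Phi(\hat{x}^\br)A^\top)_{jj}\bigr)$ is continuous there. Consequently the flows in lines \ref{alg:E:apply_ptdf_formulation}--\ref{alg:E:p_brc} are continuous; the quantities $\hat{\delta}^\text{base}_{\omega ti}$ and $\hat{\delta}^\cc_{\omega tij}$ (lines \ref{alg:E:deltas_b}--\ref{alg:E:deltas_c}) are continuous since $|\cdot|$ and $[\cdot]^+$ are; and $x^\text{br-lb}_i$ (line~\ref{alg:E:max_base}) is continuous as a finite maximum of continuous functions. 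Finally, in line~\ref{alg:E:uncon_opt} the integer $r_i$ is independent of $\hat{x}^\br$, and selecting the $r_i$-th largest entry of the array $v_i=\{\hat{\delta}^\cc_{\omega tij}/\eta^c\}_{\omega tj}$ is a continuous function of that array (the sorting map is $1$-Lipschitz in the sup-norm, so every order statistic is continuous; if $r_i$ exceeds $|v_i|$ one pads $v_i$ with zeros, which changes neither the separable problem \eqref{eq:restricted_tep_separable} nor continuity). Composing these continuous maps with the clamp in line~\ref{alg:E:con_opt} shows $E$ is continuous on $\mathcal{C}$, and Brouwer's theorem then delivers a fixed point.

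The main obstacles are the two well-definedness issues hidden inside step (ii): first, that the matrix inversions in \eqref{eq:network_matrices} never degenerate across the entire box $\mathcal{C}$ — handled by the positive-definiteness of $A^\top B A$ and by the fact that, for the non-bridge contingency branches $j\in\mathcal{B}$ that are the only ones used, the LODF denominator $1-(\Phi A^\top)_{jj}$ stays bounded away from zero (a topological consequence of $j$ being a non-bridge edge, made uniform over $\mathcal{C}$ by continuity and compactness); and second, the continuity of the order-statistic selection in line~\ref{alg:E:uncon_opt}, which is valid but slightly less routine than the surrounding steps because $x^\text{opt}_i$ is a nonsmooth, piecewise-defined function of $\hat{x}^\br$. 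Since nothing here supplies a contraction or a monotonicity structure, Brouwer (rather than Banach or Tarski) is the appropriate tool, and it yields existence but not uniqueness of the fixed point.
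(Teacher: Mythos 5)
Your proposal is correct and follows essentially the same route as the paper's proof: Brouwer's theorem on the box $[0,\overline{x}^\br]$, self-mapping via the clamping in line~\ref{alg:E:con_opt}, and continuity of $E$ established by composing the continuity of $\chi_j(\cdot)$, of the matrix inversions in \eqref{eq:network_matrices} (nondegenerate for non-bridge branches in $\mathcal{B}$), and of the max/min/order-statistic operations. Your Lipschitz argument for the order statistic and your explicit positive-definiteness justification for $A^\top B A$ are minor presentational variations on the paper's max-over-subsets-of-min argument, not a different approach.
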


\begin{proof}
We will apply Brouwer’s Fixed-Point Theorem, which states that every continuous function from a nonempty compact convex subset of a finite-dimensional Euclidean space to itself has a fixed point \cite{smart1980fixed}. First, the function $E$ is a mapping from $[0, \overline{x}^\br]$ to itself, since $[x^\text{br-lb}_i, \overline{x}^\br_i] \subseteq [0, \overline{x}^\br_i]$. Moreover, $[0,\overline{x}^\br]$ is convex, compact, and nonempty. Next, we show that $E$ is continuous in its inputs $\hat{x}^\br$, by describing it as a composition of continuous functions. The underlying $\chi_j(\cdot)$ function is assumed to be continuous. Matrix inversion to calculate PTDF is continuous over the space of full-rank square matrices. Similarly, matrix inversion to calculate LODF is continuous for branches in $\mathcal{B}$, since their ``self-PTDF'' terms in $\operatorname{diag}(\Phi({x}^\br)A^\top )$ from \eqref{eq:LODF_equation} are not 1. Further, choosing the $r$-th largest element in $v\in\R^m$, i.e., the order statistic operation, is continuous since it can be expressed as a composition over a finite set of max/min operations based on only $m$ and $r$, namely
$
\max_{S\subseteq[m]:\ |S|=r}  \left\{ \min_{j\in S} v_j \right\}
$.
Each $r$-sized subset contains $r$ elements that are greater than or equal to the inner minimum, $\min_{j\in S} v_j$. Maximizing over such subsets yields the $r$-th largest value in $v$. Thus the order statistic operator is continuous. The remaining compositions involve max, min, and affine operators (including the selection on element indices), which are continuous. Thus $E$ is continuous, and it has a fixed point by Brouwer's fixed-point theorem.
\end{proof}

\subsection{Fast algorithm for cycle-based DCOPF}\label{sec:alg:cycleOPF}
While the $\bund$ method introduced in Section \ref{sec:alg:bundle} provides a tractable algorithm, it still requires multiple calls to the subproblem oracle. Significant computational complexity is created by the KVL constraints, especially for large networks. To efficiently formulate KVL, \cite{kocuk2016cycle} proposes constraining cycle flows; the authors use LU factorization to calculate a cycle basis. Previous work \cite{horsch2018linear} reports significant computational speedups when formulating the linearized power flow as decomposed on a spanning tree and a cycle basis, when compared to the angle formulation; \cite{horsch2018linear} uses a \emph{fundamental cycle basis} based on a spanning tree. No power systems paper seems to have used \emph{minim\rev{um}} cycle bases for DC power flows. 


Meanwhile, the graph algorithms literature has studied the minim\rev{um} cycle basis (mcb) problem \cite{liebchen2005greedy, kavitha2008algorithm, mehlhorn2009minimum}, which identifies a graph's cycle basis with a \emph{minim\rev{um}} total number of edges. Since sparsity of the coefficient matrix affects solver speed, we consider applying \rev{mcb} to DCOPF, rather than arbitrary cycle bases in prior {power systems literature \cite{kocuk2016cycle,horsch2018linear}}. Polynomial-time mcb algorithms exist, but they rely on specialized graph routines, e.g., repeated shortest path solves{, which become prohibitively slow on large networks.}

{We therefore make two contributions. First, we introduce the idea of mcb to the power systems literature. Second, to overcome limitations of existing graph theory algorithms, we develop a direct integer programming (IP) algorithm.}

Let $C_{\kappa j}\in\{0,1\}$ denote the incidence of edge $j$ in cycle $\kappa$. To improve cycle $C_{\hat{\kappa}}$, we solve an IP that searches over linear combinations of cycles including $C_{\hat{\kappa}}$ (to preserve linear independence) and minimizes the number of edges:
\begin{subequations}
\label{eq:milp}
\begin{align}
\min_{w,\zeta,v} \quad &\sum\nolimits_{j\in[b]} v_j \\
    \text{s.t. } \  & \sum\nolimits_{\kappa \in [n^c] } C_{\kappa j} \cdot w_\kappa = 2 \zeta_j + v_j, \ \forall j\in[b], \label{eq:milp:xor}
\\
    & w \in \{0,1\}^{n^c}, \quad w_{\hat{\kappa}} = 1, \quad \zeta \in \mathbbm{Z}^{b}. \label{eq:milp:integral}
\end{align}
\end{subequations}

\begin{proposition}
The optimal solution $v^*$ of \eqref{eq:milp} is a shortest simple cycle linearly independent of $\{C_\kappa : \kappa \neq \hat{\kappa}\}$.
\end{proposition}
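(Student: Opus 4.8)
The plan is to strip \eqref{eq:milp} down to a statement purely about the cycle space $\mathbbm{F}_2^{b}$ and then run a standard cycle-decomposition argument. First, I would observe that at any optimal solution one may assume $v\in\{0,1\}^b$ with $v_j=\bigl(\sum_{\kappa}C_{\kappa j}w_\kappa\bigr)\bmod 2$ (choose $u_j$ to absorb the even part); by \eqref{eq:milp:xor} this forces $v$ to be exactly the $\mathbbm{F}_2$-combination $\bigoplus_{\kappa}w_\kappa C_\kappa$, hence an even-degree subgraph (an element of the cycle space), with $\sum_j v_j=|v|$ equal to its edge count. Writing $\mathcal{S}:=\{C_\kappa:\kappa\neq\hat\kappa\}$, the constraint $w_{\hat\kappa}=1$ together with the fact that $\{C_\kappa\}_{\kappa\in[n^c]}$ is an $\mathbbm{F}_2$-basis should be shown to make the attainable $v$'s precisely the even-degree subgraphs that are linearly independent of $\mathcal{S}$: if such a $v$ lay in $\operatorname{span}(\mathcal{S})$ we could solve for $C_{\hat\kappa}$ in terms of $\mathcal{S}$, contradicting basis independence; conversely any even-degree subgraph has a unique basis expansion, and independence of $\mathcal{S}$ forces the coefficient of $C_{\hat\kappa}$ to be $1$, so it is realized by a feasible $w$. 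Thus \eqref{eq:milp} computes the minimum edge count over even-degree subgraphs not in $\operatorname{span}(\mathcal{S})$, with $v=C_{\hat\kappa}$ (take $w=e_{\hat\kappa}$) a feasible point, so the minimum is well defined.

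Second, I would show the optimizer $v^*$ is a single simple cycle. Every even-degree subgraph is an edge-disjoint union of simple cycles, so $v^*=Z_1\oplus\cdots\oplus Z_m$ with $|v^*|=\sum_i|Z_i|$. Since $v^*\notin\operatorname{span}(\mathcal{S})$, not all $Z_i$ can lie in $\operatorname{span}(\mathcal{S})$; choosing $Z_{i^\star}\notin\operatorname{span}(\mathcal{S})$ gives a feasible point of the combinatorial problem with $|Z_{i^\star}|\le|v^*|$, so optimality forces $|Z_{i^\star}|=|v^*|$, hence $m=1$ and $v^*=Z_1$ is a simple cycle (and $v^*\neq 0$ since $0\in\operatorname{span}(\mathcal{S})$, so $v^*$ is genuinely linearly independent of $\mathcal{S}$). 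Finally, since simple cycles independent of $\mathcal{S}$ form a subfamily of the even-degree subgraphs not in $\operatorname{span}(\mathcal{S})$, the minimum length over that subfamily is at least the optimum of \eqref{eq:milp}, and by the previous sentence it is attained by $v^*$; therefore $v^*$ is a shortest simple cycle linearly independent of $\{C_\kappa:\kappa\neq\hat\kappa\}$.

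I expect the main obstacle to be the graph-theoretic core of the second step: combining the decomposition of an even-degree subgraph into edge-disjoint simple cycles (which makes the $\mathbbm{F}_2$-sum agree with disjoint union and makes edge counts add) with the observation that linear independence from $\mathcal{S}$ is inherited by at least one component, so that the optimum collapses to a single cycle. By contrast, the translation in the first step — justifying the implicit $v\ge 0$ / $v\in\{0,1\}^b$ normalization and converting $w_{\hat\kappa}=1$ into the linear-independence condition via uniqueness of basis expansions — is routine bookkeeping that nonetheless must be stated precisely.
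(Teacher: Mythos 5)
Your proof is correct and follows essentially the same route as the paper's: characterize the feasible $v$'s as the cycle-space elements whose basis expansion includes $C_{\hat{\kappa}}$ (equivalently, those linearly independent of $\{C_\kappa : \kappa \neq \hat{\kappa}\}$), decompose the optimal $v^*$ into edge-disjoint simple cycles, locate a component that remains independent of the other basis cycles, and use optimality plus additivity of edge counts to force a single component. The only differences are cosmetic: you make explicit two points the paper leaves implicit (the converse direction that every simple cycle independent of $\{C_\kappa : \kappa \neq \hat{\kappa}\}$ is feasible, and the nonnegativity/integrality normalization of $v$), and you identify the surviving component via closure of $\operatorname{span}(\mathcal{S})$ under $\oplus$ rather than by matching the $C_{\hat{\kappa}}$-coefficient across components as the paper does.
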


\begin{proof}
Given binary weights $w$ on the cycles, minimizing over $\zeta,v$ produces the mod-2 sum as $v$. So the feasible set for $v$ is 
$V_{\hat{\kappa}} := \{ v : v = \bigoplus\nolimits_{\kappa} w_\kappa C_\kappa , \; w_{\hat{\kappa}}=1 \}$, i.e., all linear combinations of cycles that include $C_{\hat{\kappa}}$. Since $C_{\hat{\kappa}}$ is independent of the other cycles, $v^*$ inherits this independence. At this point, $v^*$ is guaranteed to be an even-degree subgraph. It remains to verify that $v^*$ is indeed a simple cycle, i.e., connected with unique vertices. Being in the cycle space, $v^*$ decomposes into disjoint simple cycles (by separating disconnected components and splitting vertices of degree $>2$ as necessary). Thus we may write $v^* = F_1 \oplus \cdots \oplus F_\mu$, where each $F_\ell$ is a simple cycle. Expanding each $F_\ell$ onto the original basis $C$ gives $v^* = \bigoplus_{\ell=1}^\mu   \left( \bigoplus_{\kappa} m_\kappa^{F_\ell} C_\kappa \right)
    = \bigoplus_{\kappa} \left( \sum_{\ell=1}^\mu m_\kappa^{F_\ell} \bmod 2 \right) C_\kappa$. 
The coefficients must match, including $w_{\hat{\kappa}}^* = 1$, so there is at least one $m_{\hat{\kappa}}^{F_{\ell^*}}=1$. Hence $F_{\ell^*} \in V_{\hat{\kappa}}$ is feasible for \eqref{eq:milp}. So by optimality, $\|v^*\|_1 \leq \|F_{\ell^*}\|_1$. Since the $\{F_\ell\}_\ell$ are edge-disjoint, we have $\|v^*\|_1 = \sum_{\ell=1}^\mu \|F_\ell\|_1 \ge \|F_{\ell^*}\|_1$. It follows that $\mu=1$. Thus $v^*=F_1$ is a shortest simple cycle in $V_{\hat{\kappa}}$.
\end{proof}

\begin{algorithm}[ht]
\caption{Algorithm for efficient minim\rev{um} cycle basis}
 \begin{algorithmic}[1]
 \label{alg:mcb}
 \REQUIRE initial undirected cycle basis $C^0 \in \{0,1\}^{n^c \times b}$.
 \ENSURE  a minim\rev{um} cycle basis: undirected $C$, directed $D$.
 
\STATE copy $C \gets C^0$, initialize $D = \mathbf{0}_{n^c \times b}$
 
  \FOR {$\hat{\kappa} = 1, 2, ..., n^c$}
  
  \STATE $C_{\hat{\kappa}} \gets v^*$ from solving \eqref{eq:milp} on index $\hat{\kappa}$.

  \STATE Traverse cycle $\hat{\kappa}$'s nodes $(\nu_1, ..., \nu_{n_{\hat{\kappa}}})$, with $\nu_{n_{\hat{\kappa}}} = \nu_1$. \label{alg:traversal}
  
  \FOR{$i \in [n_{\hat{\kappa}} - 1]$}

  \STATE Identify branch $\varepsilon$ s.t. $\{\nu_i, \nu_{i+1}\} = \{i^{fr}_\varepsilon, i^{to}_\varepsilon\}$.
  \STATE $D_{\hat{\kappa}, \varepsilon} \gets 1$ if $\nu_i = i^{fr}_\varepsilon$, else $-1$.
  
  \ENDFOR
  \label{eq:cycle_traversal_end}
  
  \ENDFOR
 \RETURN $C, D.$
 \end{algorithmic}
\end{algorithm}

\begin{proposition}
    Alg. \ref{alg:mcb} produces a minim\rev{um} cycle basis.
\end{proposition}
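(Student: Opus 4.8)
The claim has two components: that the returned $C$, together with its signed counterpart $D$, is a cycle basis, and that it attains the minimum possible total number of edges. For the first component I would induct on the loop index $\hat\kappa$. Before the loop $C=C^0$ is a cycle basis by hypothesis; when iteration $\hat\kappa$ overwrites row $C_{\hat\kappa}$ by the optimizer $v^*$ of \eqref{eq:milp}, the previous proposition guarantees that $v^*$ is a simple cycle linearly independent over $\mathbb{F}_2$ of the other rows $\{C_\kappa:\kappa\neq\hat\kappa\}$, so the $n^c$ rows stay independent and hence remain a cycle basis; after $n^c$ iterations every row has been replaced and so is a simple cycle. For the signed matrix, the traversal in lines \ref{alg:traversal}--\ref{eq:cycle_traversal_end} walks the simple cycle $C_{\hat\kappa}$ exactly once, recording $\pm1$ on each traversed edge according to its reference orientation in $A^\br$; the resulting row of $D$ has support equal to $C_{\hat\kappa}$ and is Kirchhoff-consistent, so $D$ is the directed cycle-basis matrix of $C$. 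This component is routine.

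For minimality I would recast the task as a minimum-weight basis problem for the binary matroid whose ground set is the cycle space of the graph, each simple cycle carrying weight equal to its edge count; a cycle basis is minimal precisely when it is a minimum-weight basis of this matroid. The lever is the classical matroid fact that a basis is of minimum weight if and only if it is \emph{locally optimal}, i.e.\ $w(e)\ge w(B)$ for every basis element $B$ and every $e$ with $(\text{basis}\setminus\{B\})\cup\{e\}$ again a basis (``only if'' is immediate; ``if'' follows from one matroid exchange against an optimal basis). By the previous proposition, iteration $\hat\kappa$ sets $C_{\hat\kappa}$ to a shortest simple cycle outside the span of the other rows \emph{as they stand at that moment}, and I would record the bookkeeping fact that this never increases $|C_{\hat\kappa}|$ since the old row is itself feasible for \eqref{eq:milp}. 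To conclude, I see two routes. Route (a): show the terminal basis is locally optimal, by arguing that each later iteration perturbs a row only within the span of the remaining rows, so the hyperplane against which $C_{\hat\kappa}$ was optimized is unchanged at termination. Route (b): fix an optimal basis $C^\star$ sorted by nondecreasing length and prove $\sum_{\hat\kappa}|C_{\hat\kappa}|\le\sum_{\hat\kappa}|C^\star_{\hat\kappa}|$ termwise, via a dimension count at iteration $\hat\kappa$: the $n^c-1$ ``other'' rows span only a hyperplane (which also contains the not-yet-processed original rows $C^0_{\hat\kappa+1},\dots,C^0_{n^c}$), so at least one of $C^\star_1,\dots,C^\star_{\hat\kappa}$ escapes it and is feasible for \eqref{eq:milp}, giving $|C_{\hat\kappa}|\le|C^\star_j|\le|C^\star_{\hat\kappa}|$ for some $j\le\hat\kappa$. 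Equivalently, one may recognize the algorithm as an instance of the support-vector greedy (de Pina-type) scheme — the step-$\hat\kappa$ witness being the functional dual to $C^0_{\hat\kappa}$ in the basis $\{C^0_\kappa\}_\kappa$, reduced against $C_1,\dots,C_{\hat\kappa-1}$ — and invoke its established correctness.

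I expect the main obstacle, shared by all routes, to be exactly the interaction between the evolving basis and this analysis: iterations after $\hat\kappa$ do overwrite other rows, and the $\mathbb{F}_2$ combination chosen by \eqref{eq:milp} at a later step may involve the already-finalized $C_{\hat\kappa}$, so the hyperplane $\operatorname{span}\{C_\kappa:\kappa\neq\hat\kappa\}$ can genuinely change between iteration $\hat\kappa$ and termination. Route (a) then needs a careful argument that, even when it changes, it does not shrink so as to exclude any cycle shorter than $C_{\hat\kappa}$; and route (b) needs the ``escaping'' $C^\star_j$ to be genuinely admissible, which is not automatic — the span of $\{C^\star_1,\dots,C^\star_{\hat\kappa}\}$ together with the future original rows could fail to cover $\mathbb{F}_2^{n^c}$ — so one must track precisely how the fixed target $C^\star$ sits relative to the current hyperplane. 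Resolving this bookkeeping is the heart of the proof; once it is in place, summing the termwise bounds (or applying the matroid lever) shows that $C$, and with it $D$, is a minimal cycle basis.
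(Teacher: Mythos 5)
Your proposal goes well beyond the paper's own argument, which is essentially a two-sentence assertion: it claims by induction that every cycle in the final basis is ``the shortest possible given the others'' and that therefore no basis has smaller total cardinality, deferring to the exchange framework of \cite{berger2009minimum}. The subtlety you isolate --- that iterations after $\hat{\kappa}$ overwrite other rows, so the hyperplane $\operatorname{span}\{C_\kappa:\kappa\neq\hat{\kappa}\}$ against which $C_{\hat{\kappa}}$ was optimized need not coincide with the one present at termination --- is real, and the paper's proof does not address it; ``local optimality at termination'' is precisely the claim that requires work. Of the routes you list, (a) and (b) as literally stated do not close (as you yourself observe, a hyperplane can contain all of $C^\star_1,\dots,C^\star_{\hat{\kappa}}$ whenever $\hat{\kappa}<n^c$), but your de~Pina identification does, and it is worth recording why: let $S_{\hat{\kappa}}$ be the unique nonzero $\mathbbm{F}_2$-functional on the cycle space vanishing on the iteration-$\hat{\kappa}$ hyperplane $W_{\hat{\kappa}}=\operatorname{span}(C_1,\dots,C_{\hat{\kappa}-1},C^0_{\hat{\kappa}+1},\dots,C^0_{n^c})$, so that \eqref{eq:milp} returns a shortest cycle $v$ with $\langle v,S_{\hat{\kappa}}\rangle=1$. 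The $S_{\hat{\kappa}}$ are linearly independent: evaluate any dependency $\sum_{i\in I}S_i=0$ at $C^0_{\max I}$, which lies in $W_i$ for every $i<\max I$ but not in $W_{\max I}$. Hence for any competing cycle basis $B$ the $\mathbbm{F}_2$-matrix $M_{ij}=\langle B_j,S_i\rangle$ is nonsingular, its determinant (equal to its permanent over $\mathbbm{F}_2$) is $1$, and some permutation $\pi$ satisfies $\langle B_{\pi(i)},S_i\rangle=1$ for all $i$; minimality of $C_i$ subject to that affine condition gives $|C_i|\le|B_{\pi(i)}|$ termwise, and summing yields minimality of the output. This argument never needs to track how the hyperplanes evolve after step $\hat{\kappa}$, which is exactly why it succeeds where routes (a) and (b) stall. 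In short, your diagnosis of the gap is accurate (and applies equally to the paper's proof as written), and the third route you name is the one to execute rather than merely cite.
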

\begin{proof}
At each iteration $\hat{\kappa}$, \eqref{eq:milp} selects the shortest cycle that is linearly independent of the other cycles. This replacement is optimal among all feasible cycles. By induction, $C$ remains a valid cycle basis, and every cycle in the final basis is the shortest possible given the others. No other basis achieves a smaller total cardinality, and the algorithm yields a minim\rev{um} cycle basis. Then, the cycle traversal in lines \eqref{alg:traversal}-\eqref{eq:cycle_traversal_end} assigns consistent orientations to produce a directed basis $D$.
\end{proof}

Our method follows the overall basis exchange {strategy} described in \cite{berger2009minimum}. Our key novelty is using an IP {algorithm} to perform each exchange, rather than a bespoke graph procedure. Although without a polynomial complexity guarantee, the IP approach improves practical implementation and performance.

\section{Numerical Results}

\subsection{Network data and calibration}

We extract a geographically accurate topology $(A^\br, A^\dc)$ for the US Western Interconnection with {PyPSA-Earth} \cite{parzen2023pypsa}, and we apply \cite{birchfield2016grid}'s engineering parameters to estimate initial branch capacities $\overline{w}^\br$ and impedances $\chi^0$ based on distances and voltage levels. Generators from EIA-860 are mapped to \rev{nodal} locations. This approach produces a more geographically-realistic network than purely synthetic grid data. \rev{We focus} on the bulk transmission system \rev{and} include branches at 230kV and above, following \cite{shawhan2014does}. \rev{Hydro} and UC clustering zones, $\mathcal{G}^\hy$ and $\mathcal{G}^\uc$, are based on \cite{pecci2025regularized}'s six EPA-IPM-based Western zones. \rev{UC parameters are from \cite{palmintier2013heterogeneous}.}

Next, we calibrate the initial estimated network to satisfy zero load shed and zero branch violation during a peak load day, following \cite{xu2020us}'s calibration approach. The aim is to produce a realistic approximation of the status quo grid. We perform this calibration by iteratively solving a small optimization problem within an impedance feedback procedure similar to Fig. \ref{fig:RTEP_iterative}, producing a mutually consistent pair of $\overline{w}^\br$ and $\chi^0$. 

We represent operations with 52 weekly-horizon, hourly-resolution scenarios. While we use a historical weather year of 2023, our framework accommodates operational scenarios covering multiple weather years. Historical zonal load time series are mapped to nodes based on zip code populations, following \cite{birchfield2016grid}, to produce $\overline{p}^\dd$. Wind and solar availability factors $a^\g_{\omega t}$ are derived from NOAA Rapid Refresh reanalysis data. {Hydropower parameters and inflow data are adapted from the dataset in \cite{pecci2025regularized}}. Generation capital costs from \cite{nrel2024atb} are annualized over 20 years to give $c$, and operating costs $c^\g_\omega$ use EIA estimates. For investment limits $\overline{x}$: land restrictions and ordinances from \cite{lopez2023impact} constrain wind and solar installation, and storage and geothermal are assumed to be available up to 1GW at each node with voltage of 345kV or higher. In this study, we restrict battery duration to 4 hours, i.e., $x^\ese {=} 4 x^\esp$. We model a policy goal of 80\% carbon-free generation, i.e., $\overline{x}^\tem {=} 0.2 \sum_{\omega t} \overline{p}^\dd_{\omega t}$. We assume a load shedding cost of $c^\sh {=} \$$10,000 / MWh, and a contingency violation penalty of $c^\vio {=} \$$2,000 / MWh, which are representative of typical values used by grid operators. We choose $\alpha {=} 0.3$ for the bundle method, following theoretical justification in \cite{lemarechal1995new}.

\subsection{Problem size and computational resources}
The above method creates a Western Interconnection network with 1,493 nodes, 3 HVDC lines, and 1,919 AC branches (of which 1,728 are lines and 1,542 are non-islanding branches). We model $|\Omega|{=}52$ scenarios of $T{=}168$ week-long, hourly-resolution operations. In total, this CEM instance has {78.7} million non-contingency variables and {138} million non-contingency constraints, along with 20 billion total contingency constraints and their associated slack variables.

We demonstrate the computational performance and solution accuracy impact from this paper's proposed algorithmic components. We implement algorithms in Julia (v1.\rev{12.6}) \cite{bezanson2017julia}. Linear programs are written in JuMP (v1.25.0) \cite{lubin2023jump} and solved with Gurobi (v\rev{13.0.2}) \rev{barrier method} \cite{gurobi2025}. Computation is performed on a single AMD EPYC 9474F node with {96 CPU cores and 376GB of RAM}, on MIT's Engaging Cluster.

\subsection{Impact of high-fidelity grid modeling}
\label{sec:bundle_experiments}

We test the $\bund$ algorithm under a range of grid physics representations, and we consistently evaluate all solutions on the original CEM objective function in \eqref{eq:cem} with {$n{-}1$ transmission security-constrained DCOPF}, labeled as \emph{``Total cost''} \rev{(in million USD per year)}. By construction, \eqref{eq:cem}'s objective uses \rev{a consistent} $x^\br$ for both ratings and impedance calculation, \rev{so} it incorporates impedance feedback. This evaluation is tractable to compute since $x$ is fixed, in contrast to CEM.

{Table \ref{tab:bundle_results} reports results from one major iteration of BUND-CORR (for results about repeating multiple BUND-CORR feedback iterations, please see Section \ref{sec:multi_bund_corr}).} In Table \ref{tab:bundle_results}, each column represents a level of grid fidelity captured in the BUND subproblems: {``Net. Flow'' means network flow which ignores KVL}, ``DC power flow'' ignores contingencies, ``DC-0.7'' adopts {PyPSA}'s $30\%$ branch-derating heuristic, and ``SC-DC'' is our full security-constrained model with ${n{-}1}$ contingencies. The BUND rows report the iteration counts, solution times (``Minutes''), and peak memory (``{RAM} GB'') required for BUND to converge to $\epsilon {=}1\%$ optimality gap. A majority of time is spent on operational subproblems (``$\mathcal{O}$ minutes''). Here the minim\rev{um} cycle basis formulation is used for all DC methods. We compare the final lower and upper bounds ($L_k$ and $U_k$, in {million} USD per year), and optimal total upgrades of {short-duration storage} and branches (in GW).

\begin{table}[t]
\centering
\caption{Performance and Impact of Contingencies and Grid Physics.}
\vspace{-5pt}
\label{tab:bundle_results}
\setlength{\tabcolsep}{3pt}
\begin{tabularx}{\columnwidth}{ Y r r r r r }
\toprule
\multicolumn{1}{c}{ } & \multicolumn{1}{c}{} & \multicolumn{3}{c}{{\emph{Contingencies in BUND? \textbf{No}} }  } & \multicolumn{1}{c}{{\textbf{\emph{Yes}}}}
\\
\cmidrule{3-5}
\cmidrule(lr){6-6}
\textbf{Method} & \textbf{Metric} & \textbf{Net. Flow} & \textbf{DC} & \textbf{DC-0.7} & \textbf{SC-DC} \\
\midrule
& Iterations & \rev{26} & \rev{56} & \rev{98} & 1\rev{04}
\\
& Minutes & \rev{31.2} & 2\rev{07.2} & \rev{357.0} & \rev{474.3}
\\
& $\mathcal{O}$ minutes & \rev{30.8} & 20\rev{5.2} & \rev{347.1} & \rev{450.2} \\
& {RAM (GB)} & \rev{337} & \rev{357} & \rev{350} & \rev{354}
\\
BUND & \cellcolor{gray!20} $L_k$ cost {(\$M)}
&\cellcolor{gray!20} \textbf{\${16,69\rev{0}}} 
&\cellcolor{gray!20} \textbf{\${16,\rev{891}}} 
&\cellcolor{gray!20} \textbf{\${17,5\rev{44}}} 
&\cellcolor{gray!20} \textbf{\${17,4\rev{03}}}
\\
&\cellcolor{gray!20} $U_k$ cost {(\$M)}
&\cellcolor{gray!20} \textbf{\${16,8\rev{51}}}
&\cellcolor{gray!20} \textbf{\${17,057}}
&\cellcolor{gray!20} \textbf{\${17,7\rev{21}}}
&\cellcolor{gray!20} \textbf{\${17,\rev{575}}}
\\
& Storage (GW) & 4.\rev{7} & \rev{4.5} & 5.\rev{5} & 4.\rev{8}
\\
& Branch (GW) & 3\rev{8.4} & 5\rev{5.2} & 1\rev{25.2} & 1\rev{11.0}
\\
\hline
& \cellcolor{gray!20}\textbf{Total cost} {(\$M)}
& \cellcolor{gray!20}\textbf{\${2\rev{46,504}}}
& \cellcolor{gray!20}\textbf{\${1\rev{49,233}}}
& \cellcolor{gray!20}\textbf{\${4\rev{1,760}}}
& \cellcolor{gray!20}\textbf{\${17,6\rev{16}}}
\\
{Eval. of \eqref{eq:cem}} & {{Cost diff. (\%)}}
& {{1\rev{299.4}\%}}
& {{7\rev{47.2}\%}}
& {{1\rev{37.1}\%}}
& --
\\
{(BUND)}
\normalsize & Shed (GWh) & 1\rev{8,708.5} & 9,\rev{327.6} & \rev{533.7} &
    \rev{3.2}
\\
& Viol. (GWh) & \rev{21,147.0} & 19,\rev{376.5} & 9,\rev{287.3} & \rev{21.5}
\\
\hline
& Iterations & \rev{98} & \rev{44} & \rev{92} & \rev{41} \\
CORR & Minutes & 2.\rev{1} & \rev{0.9} & \rev{1.9} & {0.9} \\
& Branch (GW) & 44\rev{5.1} & 30\rev{6.5} & \rev{199.4} & 1\rev{32.1} \\
\hline
& \cellcolor{gray!20}\textbf{Total cost} {(\$M)}
& \cellcolor{gray!20}\textbf{\$\rev{{25,042}}}
& \cellcolor{gray!20}\textbf{\${2\rev{5,146}}}
& \cellcolor{gray!20}\textbf{\${1\rev{7,985}}}
& \cellcolor{gray!20}\textbf{\${17,61\rev{2}}} \\
{Eval. of \eqref{eq:cem}} & {{Cost diff. (\%)}}
& \rev{{{42.2}\%}}
& \rev{{{42.8}\%}}
& {{2.\rev{1}\%}}
& -- \\
{(BUND}
& Shed (GWh)
& \rev{227.5}
& \rev{350.4}
& 2.\rev{3}
& 2.\rev{2} \\
{+CORR)} & Viol. (GWh)
& \rev{1,834.8}
& 1,\rev{628.3}
& \rev{12.5}
& \rev{2.0} \\
\bottomrule
\end{tabularx}
\end{table}

The contingency-aware bundle method \rev{(SC-DC)} converges {in ${\sim}$\rev{8}} hours, indicating tractability even at large scale. \rev{During its 104 iterations, the algorithm identifies and adds 2.1 million contingency constraints. At the final optimal solution, 733 thousand contingency constraints are binding (about 84 per operating hour, vs. about 5 binding base-case constraints per hour). The contingency-caused ratio of 94\%, or $84/(84+5)$, is consistent with Table \ref{tab:caiso_congestion}.}

\rev{We note the total number of contingency constraints considered is substantially smaller than the full possible set. In fact, the ability to use intelligent screening to selectively handle only the most impactful contingency constraints, while avoiding ``false negatives'' of missed constraints, is at the conceptual core of CANOPI's algorithm design. }

\subsubsection{{{Pre-CORR}}} Grid physics representation significantly impacts the optimal level of {branch upgrades}, suggest\rev{ing} that spatially-coarse expansion models under-invest in technologies with high locational value. Realistic evaluation (``BUND Evaluation'') reveals costs up to 14x higher for investment solutions produced by coarse models ({comparing \$\rev{246.5}B vs. \$17.6B}), driven by load shedding and branch violations. Ignoring contingencies \rev{can underestimate} system costs. 

\subsubsection{{{Post-CORR}}}
Subsequent remedial transmission upgrades (``CORR'') help reduce but cannot eliminate gaps: the solutions from network flow and DC remain \rev{42.2\%} and \rev{42.8\%} costlier {during evaluation than SC-DC (reported in ``Cost difference'' for ``BUND+CORR''). After CORR, the ``DC-0.7'' solution still has a nontrivial cost delta of \$\rev{17,985}-\rev{17,612} = \$\rev{373} million per year (2.\rev{1}\%) versus SC-DC, along with higher branch violations (\rev{12.5} vs. \rev{2.0}).} {Fig. \ref{fig:shadow_price_plot} plots top contingency constraints' shadow prices (averaged over hours of the year), which provide a summary metric of congestion severity; the results show that SC-DC can better mitigate contingency constraints than DC-0.7. Table \ref{tab:correlations} compares the Pearson correlations of nodal investments between different models. Storage and branch correlations remain below 0.8, and these differences in investment decisions cause DC-0.7's performance gap. Thus, CORR is helpful but} it is not a full substitute for planning with endogenous contingencies. 

\begin{figure}[ht]
    \centering
    \includegraphics[width=1\linewidth]{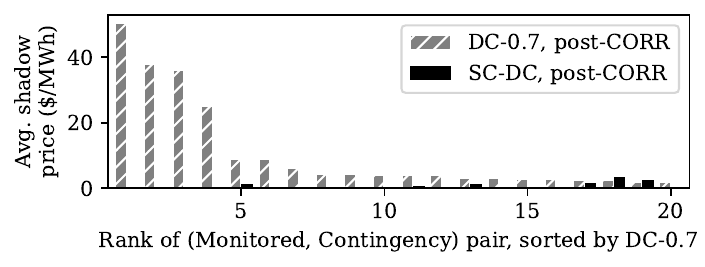}
    \vspace{-25pt}
    \caption{{Average shadow prices for the top 20 transmission contingency constraints from the post-CORR DC-0.7 solution, versus from SC-DC.}}
    \label{fig:shadow_price_plot}
\end{figure}
\vspace{-5pt}

\vspace{-5pt}
\begin{table}[ht]
\centering
\caption{Nodal investment correlations vs. SC-DC (pre \& post-CORR)}
\vspace{-5pt}
\label{tab:correlations}
\setlength{\tabcolsep}{3pt}
\begin{tabularx}{\columnwidth}{ Y  r r r r r r}
\toprule
\textbf{Method} & \textbf{Wind} & \textbf{Solar} & \textbf{Geoth.} & \textbf{Storage} & \textbf{Branch (pre)} & \textbf{Branch (post)} \\
\hline
Net. flow & 0.9\rev{6} & 0.6\rev{5} & 0.\rev{55} & 0.\rev{79} & 0.4\rev{2} & 0.45 
\\
DC & 0.96 & 0.\rev{80} & 0.\rev{73} & 0.\rev{78} & 0.5\rev{2} & 0.6\rev{3}
\\
DC-0.7 & 0.9\rev{9} & 0.9\rev{1} & 0.9\rev{1} & 0.7\rev{6} & 0.\rev{61} & 0.7\rev{9}
\\
\bottomrule
\end{tabularx}
\end{table}

\subsubsection{{The value of CORR and SC-DC methods}} 
The CORR method, \rev{one of this paper's contributions,} significantly improves cost and reliability, while incurring low computational burden (all solved within \rev{$\sim$2} minutes); it has relevant utility as a fast tool to comparatively evaluate capacity expansion models. At a \rev{comparable} computational cost to DC-0.7, the SC-DC method is inherently a more accurate model. Even \rev{if} a 2.\rev{1}\% cost delta is acceptable, \rev{calculating} such a \rev{bound} in the first place requires solving SC-DC. \rev{Further, Fig. \ref{fig:security_factor_rev2}'s grid search shows that varying the security factor to values other than 0.7 never closes the optimality gap.} Therefore, the \rev{CANOPI-enabled} SC-DC model provides valuable information beyond what is obtainable with a derating heuristic.

\begin{figure}[ht]
    \centering
    \includegraphics[width=1\linewidth]{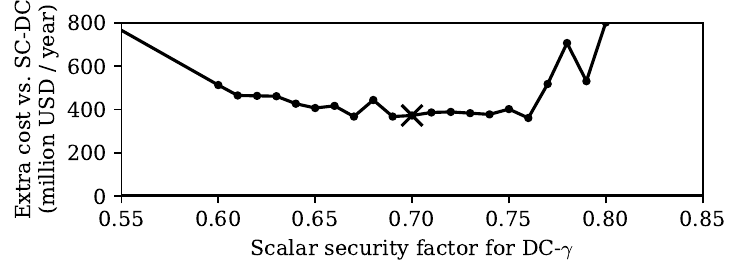}
    \vspace{-25pt}
    \caption{\rev{The extra costs of DC-$\gamma$ relative to SC-DC (post-CORR) for a range of scalar security factors $\gamma$ (DC-0.7 is marked with ``x'').}}
    \label{fig:security_factor_rev2}
\end{figure}
\vspace{-10pt}

\subsection{Impact of minim\rev{um} cycle basis}

{Operational subproblems take above 90\% of the solve times, in Table \ref{tab:bundle_results}. We assess the ability of mcb to speed up DCOPF subproblems.} First, we compare Alg. \ref{alg:mcb}'s performance with alternative calculation methods. In Table~\ref{tab:cycle_basis_calc}, a fundamental cycle basis is calculated using the spanning tree based on traversal starting from each node, and the average and minimal results are shown (where the entire search time is required to find the minimal {fundamental} basis). {The specialized mcb algorithm from \cite{kavitha2008algorithm} is implemented by \cite{hagberg2008networkx}.}

\begin{table}[ht]
    \centering
    \caption{Cycle basis methods: computation time and cycle lengths.}
    \vspace{-5pt}
    \label{tab:cycle_basis_calc}
    \begin{tabularx}{\columnwidth}{c l r r r}
\toprule
 {\textbf{Literature}} & \textbf{Method for cb} & \textbf{Seconds} & \textbf{Total}
& \textbf{Longest} \\
    \midrule
       & Fundamental {\cite{horsch2018linear}} & 1.4  & 9,640 & 236 
       \\
       {Power systems} &Fundamental (best)  & 16.8  & 7,434 & 191 
       \\
       & LU factorization {\cite{kocuk2016cycle}} & 0.8 & 8,854 & 150
       \\
       \hline
       {Graph theory} & mcb \cite{kavitha2008algorithm}\cite{hagberg2008networkx} & 1,294.0 & 2,755 & 33
\\
\hline
\multirow{2}{*}{{This paper}} & mcb Alg. \ref{alg:mcb}, HiGHS & 94.1 & 2,755 & 33
       \\
& \textbf{mcb Alg. \ref{alg:mcb}, Gurobi} & \textbf{23.4} & \textbf{2,755} & \textbf{33}
       \\
\bottomrule
    \end{tabularx}
\end{table}

In Table \ref{tab:cycle_basis_calc}, generic cycle bases can have total cardinality $\|C\|_1$ (``Total'') that is 3.5x \rev{larger} than that of a minim\rev{um} cycle basis (9640 / 2755), and have a largest cycle length of 7.2x \rev{longer than} mcb (236 / 33). On the other hand, {existing mcb algorithms are significantly slower than} non-minim\rev{um} bases. In contrast, our proposed Alg. \ref{alg:mcb} achieves the same sparsity while preserving a tractable calculation speed{,} achieving a 55x speedup with Gurobi (1294 / 23), and 14x with open-source solver HiGHS (1294 / 94).

In Table \ref{tab:cb_kvl_table}, {we test the impact of different KVL formulations}. The multi-period DC optimal power flow operational subproblems (168-hour horizon) are solved on \rev{40} combinations of  $x_k$ and $\{\mathcal{J_\omega}\}_\omega$ in order to provide input samples of different capacities and contingencies. For each sample, the solve time is taken as the max over $\omega$ (solved in parallel), and the statistics of the difference percentages (vs. mcb) are summarized in Table \ref{tab:cb_kvl_table}. {Using minim\rev{um} cycle basis (mcb) consistently outperforms all other KVL forms.} In particular, improved sparsity from using mcb results in a significant {average} speedup by \rev{25}\%-\rev{39}\% compared to generic cycle bases (``Mean diff.''), and by \rev{17}\% compared to the angle formulation {(with voltage $\theta$ angles)}. Alg. \ref{alg:mcb} improves the numerical sparsity of the DC power flow formulation, and can be a drop-in replacement when using a cycle flow formulation since the upfront basis calculation time is negligible. 


\vspace{-5pt}
\begin{table}[ht]
\centering
\caption{Impact of KVL formulation on subproblem solve times}
\vspace{-5pt}
\label{tab:cb_kvl_table}
\setlength{\tabcolsep}{3pt}
\begin{tabularx}{\columnwidth}{ Y   r r r }
\toprule
\textbf{KVL form} & \textbf{Mean diff. (\%)} &  \textbf{Median diff. (\%)}   & \textbf{\% of sample $>$ mcb} \\
\hline
PTDF           &  2\rev{89}\% &2\rev{84}\% & 100\% 
\\
$\theta$ angles &  \rev{17}\% &1\rev{6}\% & 9\rev{8}\%
\\
 cb (fund.) & \rev{25}\% & \rev{24}\% & 100\% \\
 cb (LU)  & \rev{39}\% & \rev{39}\% & \rev{100}\% \\
\bottomrule
\end{tabularx}
\end{table}
\vspace{-10pt}


\subsection{{Attributing} speedups from algorithm design choices}
\label{sec:speedup_decomp}

We quantify the contributions of CANOPI's algorithm design choices in speeding up {the BUND model with SC-DC. We focus on three main factors: (1) subproblem speedup, (2) inexact oracles with interleaving, and (3) analytic center in the level set problem. The first two techniques are novelties introduced by CANOPI, and the latter is explored in \cite{zhang2025integrated,pecci2025regularized} (while CANOPI is the first to apply it to the nodal SC-DC setting). Also, we note that CANOPI's combination of impedance approximation and CORR enables tractable solution of the overall algorithm, although we leave this structural design choice out of the following quantitative analysis.

{
Table \ref{tab:speedup_decomp} attributes the hours saved and percentages of total (``Saved \%'') to the different algorithm aspects. The ``Pre-CANOPI'' and ``Post-CANOPI'' columns report experimental measurements, except for values marked with ``$\sim$'' that are estimated based on the following analysis.
}

\begin{table}[ht]
\centering
\caption{Decomposition of BUND speedup (assumes \rev{104} iterations)}
\vspace{-5pt}
\label{tab:speedup_decomp}
\setlength{\tabcolsep}{3pt}
\begin{tabularx}{\columnwidth}{r r r r r r Y}
\toprule
\textbf{Algorithm aspect} & \textbf{Pre} & \textbf{Post} & \textbf{Hrs. saved} & \textbf{Saved \%} & \textbf{Innovation} \\
\hline
Subproblem (min.)  & $\sim$5.\rev{07} & 4.\rev{33} & $\sim$\rev{3.83} & $\sim$7.\rev{2}\% & mcb vs. $\theta$\\
Subproblem repeats & {5} & {1} & $\sim$3\rev{2.56} & $\sim$\rev{60.9}\% & interleaving  \\
\hline
Master problem (min.) & 1\rev{0.05} & 0.\rev{23} & $\sim$\rev{17.02} & $\sim$\rev{31.9}\% & $ac$ vs. QP\\
\hline
Total time (hours)  & $\sim$\rev{61.31} & \rev{7.91} & $\sim$\rev{53.41} & 100.0\% & CANOPI\\
\bottomrule
\end{tabularx}
\end{table}

First, using mcb speeds up the time per subproblem: \rev{achieving} 4.\rev{33} minutes on average (\rev{450} minutes / 1\rev{04} iterations; Table \ref{tab:bundle_results}), \rev{compared to the 17\%-slower voltage angle baseline} (Table \ref{tab:cb_kvl_table}) \rev{estimated at} 5.\rev{07} minutes. Further, interleaving reduces the number of subproblem re-solves. Empirically, SC-DCOPF evaluations require \rev{4}$\sim$7 iterations for contingency convergence (e.g., when calculating Table \ref{tab:bundle_results}); we \rev{estimate} 5 subproblem re-solves per first-stage iterate $x_k$ in the non-interleaved counterfactual. We assume total BUND iterations remain at \rev{104} (Table \ref{tab:bundle_results}).

The mcb and interleaving effects compound with each other. The subproblem time ($a {=} 4.\rev{33}$) multiplied by iterations ($b {=} 1$) gives the total time spent on subproblems ($a\cdot b$). Contrast this with the counterfactual subproblem time ($a'{=}5.\rev{07}$) and iterations ($b'{=}5$) in the absence of CANOPI techniques. We explain the contribution of each effect using the algebraic identity $a'b' - ab = (a'-a)\frac{(b'+b)}{2} + (b'-b)\frac{(a'+a)}{2}$. The first term attributes $(5.\rev{07} - 4.\rev{33})\frac{(5+1)}{2}\cdot \frac{1\rev{04}}{60} = \rev{3.83}$ hours saved by mcb's reduction of subproblem times. The second term attributes $(5-1)\frac{(5.\rev{07}+4.\rev{33})}{2}\cdot \frac{1\rev{04}}{60} = 3\rev{2.56}$ hours saved from interleaving's reduction of subproblem repeats. 

Second,} ``Master problem time'' refers to Alg. \ref{alg:bundle}'s lines \eqref{alg:bundle:fk_hat}-\eqref{alg:bundle:x}, the majority of which is spent on the master lower-bound problem in line \eqref{alg:bundle:L} and the analytic center problem in line \eqref{alg:bundle:x} to obtain the next iterate. Using the \emph{analytic center} approach rather than the traditional level method's quadratic projection objective leads to a significant speedup factor {(1\rev{0.05} to 0.\rev{23} minutes)}, which is even larger in our novel \emph{nodal} context than previously reported for zonal models \cite{pecci2025regularized}. {Finally, the ``Pre'' total hours is estimated as $(5.\rev{07}\times 5 + 1\rev{0.05}) \cdot \frac{1\rev{04}}{60}$.}

\vspace{-10pt}

{
\subsection{Comparison to direct solution of the extensive-form model}
\label{sec:direct}

To further contextualize the computational performance, we use Gurobi to directly solve the extensive form \rev{of the} BUND problem \eqref{eq:bund}, using a high-memory compute node with 1.5TB of RAM and 96 cores. A truly extensive-form model including several billion contingency constraints is computationally intractable. \rev{To replicate the SC-DC setting, we first solve without contingencies}, i.e., $\mathcal{J}_{\omega}{=}\emptyset$, \rev{and then iteratively screen and add contingency constraints} after each extensive-form solution. \rev{This is solved using 4 major extensive-form solves, totaling 1,322} minutes (\rev{22} hours), using \rev{677GB} of memory. Thus, using the Algorithm \ref{alg:bundle} decomposition to solve BUND with \rev{SC-DC} provides an \rev{overall 2.8x} speedup (\rev{1322} min. vs. \rev{474} min.) and \rev{1.9}x memory improvement (\rev{677GB} vs. \rev{354GB}).

\rev{Standard decomposition by itself (Table \ref{tab:speedup_decomp}'s ``Pre-CANOPI'') may be insufficient to outperform the extensive-form's solve time. CANOPI's methodological components, i.e., interleaving along with mcb and nodal-resolution modified master problem, enable the overall speedup.}

}

{
\subsection{Convergence of multiple BUND-CORR iterations}
\label{sec:multi_bund_corr}


We explore the convergence behavior of multiple BUND-CORR iterations. Table \ref{tab:bund_corr_iter} shows that the impedance feedback effect after major iteration 1 causes the evaluated costs (both pre-CORR 17,6\rev{16} and post-CORR 17,61\rev{2}) to exceed BUND's fixed-impedance upper bound $U_k$ of 17,\rev{575}. By major iteration 2\rev{,} both pre- and post-CORR costs lie squarely within BUND's bounds: $\{$17,5\rev{10}, 17,4\rev{95}$\} \subseteq $ [17,3\rev{57}, 17,5\rev{27}]. This suggests a kind of agreement between the two methods, implying limited improvement opportunities from further BUND-CORR repeats. \rev{T}he first post-CORR cost is only \rev{0.66}\% away from the final (17,\rev{495} vs. 17,\rev{612}). Thus, Table \ref{tab:bund_corr_iter} suggests that CANOPI converges to reasonable accuracy within only $\sim$\rev{2} major iterations of BUND-CORR. \rev{This result helps} validate the usage of one BUND-CORR iteration in Section \ref{sec:bundle_experiments}.

\begin{table}[ht]
\centering
\caption{Convergence behavior of BUND and CORR iterations}
\vspace{-5pt}
\label{tab:bund_corr_iter}
\setlength{\tabcolsep}{3pt}
\begin{tabular}{c r r }
\toprule
\textbf{Major BUND + CORR iteration} & \textbf{1} & \textbf{2} \\
\hline
BUND iterations & \rev{104} & \rev{72} \\
BUND minutes & \rev{474.3} & 289.\rev{8} \\
CORR minutes & 0.9 & 0.\rev{9}  \\
Total minutes change (\%) & -- & $-$\rev{39}\% 
\\
\hline
BUND new branches (GW) & 1\rev{11.0} & 3\rev{6.0}  \\
CORR new branches (GW) & 1\rev{32.1} & \rev{52.3}  \\
\hline
BUND $L_k$ cost (\$M) & \quad \$17,4\rev{03} & \quad \$17,3\rev{57}  \\
BUND $U_k$ cost (\$M) & \$17,\rev{575} & \$17,5\rev{27}  \\
\hline
Evaluation of \eqref{eq:cem} (BUND) (\$M) & \$17,6\rev{16} & \$17,51\rev{0}  \\
Evaluation of \eqref{eq:cem} (BUND+CORR) (\$M) & \$17,61\rev{2} & \$17,4\rev{95}  \\
\hline
\rev{Are evaluations within bounds $[L_k,U_k]$?} & \rev{No} & \rev{Yes}  \\
\rev{Final} cost delta vs.\ \rev{prior} iteration (\%) & -- & $-$0.\rev{66}\% \\
\bottomrule
\end{tabular}
\end{table}
}

{
\section{Extension to discrete branch investments}
\label{sec:extension}
The CANOPI framework may be extensible to \emph{endogenous} discrete branch investments, by adapting several mathematical techniques, though challenges remain. A promising starting point is \cite{mehrtash2020security}, which formulates discrete transmission expansion using shift factors from the full candidate network, \rev{representing} unbuilt lines via flow cancellation pairs. The underlying full topology's fixed nature is compatible with BUND's pre-computation assumptions. CANOPI's mcb reformulation could replace the base-case power flow constraints. However, each contingency \rev{may} still require its own full set of variables and constraints, \rev{versus} the sparsity of pre-computed LODFs. That said, \rev{contingency interleaving forms a majority of Table \ref{tab:speedup_decomp}'s speedup, and this benefit} would still apply. \rev{A}fter decomposition, subproblems remain LPs \rev{and fit} within Alg.\ \ref{alg:bundle}.
\par
For discrete decisions, \cite{pecci2025regularized}'s two-phase strategy first solves a continuously-relaxed CEM via level-ACCPM, then warm-starts an MILP with the generated cutting-plane model. Valid inequalities such as the cycle-based cuts in \cite{kocuk2016cycle} could strengthen and accelerate the MILP phase. In \cite{pecci2025regularized}'s numerical study, the continuous first phase dominates computation (about 7 out of 8 hours, or 88\% of total time). In addressing the continuous problem, our paper already tackles a computational bulk of the overall TEP problem. We leave detailed development and testing \rev{of this direction} to future work.
}

\section{Conclusion}

This paper introduces CANOPI, a comprehensive modeling and algorithmic framework for integrated nodal capacity expansion with endogenous transmission contingencies and detailed hourly operations {including representations of unit commitment and long-duration storage}. To our knowledge, this is the first work to solve such problems on a realistic scale. To achieve this, we develop a series of methodological contributions. First, we formulate a model that embeds impedance feedback effects. To ensure tractability, we construct a linearized approximation, combined with an algebraic fixed-point correction procedure that avoids repeatedly re-optimizing the full problem as impedances change. Second, we design a specialized bundle algorithm that unites analytic-center stabilization with adaptive contingency constraint generation. The algorithm design of \emph{interleaving} the iterations of constraint generation and of the bundle method avoids full convergence on contingency constraints at every bundle iteration, especially early on with poor quality investment solutions. Third, we present an IP routine to compute minim\rev{um} cycle bases, which produces sparser KVL constraints and reduces the solve times of operational subproblems.

These contributions connect disparate groups of research literature. We bridge macro-energy systems (focusing on accurate time domain representation) with transmission planning (including nodal power flows and contingency constraints). Our realistic-geography grid dataset enables comparisons with zonal models derived from real networks. Theoretically, Alg. \ref{alg:bundle}'s convergence proof reinforces connections between the distinct literature on the level method, ACCPM, and inexact oracles. The practical minim\rev{um} cycle basis algorithm, and its application to DC power flow, connects the graph theory, math programming, and power systems perspectives.

\rev{Our} numerical \rev{experiments} demonstrate the importance of nodal \emph{and} contingency-aware grid physics (Table \ref{tab:bundle_results}), \rev{and} the speedup contribution of each CANOPI algorithmic innovation (Table \ref{tab:speedup_decomp}). {We numerically demonstrate the convergence of the overall impedance correction framework (Section \ref{sec:multi_bund_corr}).} CANOPI lowers the computational barrier for researchers and practitioners to utilize more accurate nodal planning tools, translating to improved economic and reliability outcomes.

\bibliographystyle{IEEEtran}
\bibliography{ref}

\newpage

\vfill

\end{document}